\title[Time scheme for a chemotaxis-fluids model]
{A convergent time scheme for a chemotaxis-fluids model with potential consumption}
\author[D. Barbosa]{Daniel Barbosa \textsuperscript{1}}
\author[F. Guillén-González]{Francisco Guillén-González \textsuperscript{2}}
\author[G. Planas]{Gabriela Planas \textsuperscript{1}}
\address{\textsuperscript{1} Departamento de Matemática, Instituto de Matemática, Estatística e Computação Científica,
 Universidade Estadual de Campinas
}
\address{\textsuperscript{2} EDAN  and IMUS, Universidad de Sevilla}
\begin{document}

\begin{abstract}
    The present work deals with a Keller-Segel-Navier-Stokes system with potential consumption, under homogeneous Neumann boundary conditions for cell density and chemical signal, and of Dirichlet type for the velocity field, over a bounded three-dimensional domain. 
    The paper aims to develop a time discretization scheme converging to weak solutions of the system,
    which are uniformly bounded at infinite time.
    While global existence results are already known for simplified cases, either in absence of fluid flow or for linear consumption, the existence of global weak solutions for the fully coupled system with potential consumption has remained as an open problem.
\end{abstract}

\keywords{Chemotaxis, Navier-Stokes equations, time discrete scheme, energy law, convergence, weak solutions.}

\subjclass[2020]{35K50, 35K55, 35Q30, 65M12, 76D05, 92C17}

\maketitle

\section{Introduction}\label{secao 1}

In this work, we investigate the dynamics of a chemotaxis-fluid system, a mathematical model with an intricate interplay between cellular behavior and fluid dynamics. The system, described within a bounded three-dimensional domain, encompasses three coupled nonlinear partial differential equations governing the evolution of cell density, chemical concentration,  
and fluid velocity field. 

Originating from biological scenarios such as bacterial motility in response to chemical gradients, chemotaxis modelling was first introduced by Keller and Segel \cite{Keller1970}. 
Chemotaxis is seen in many organic functions, even playing an important role in inflammatory diseases, wound healing, cancer metastasis or disease progressions, \cite{Murphy2001,Parales2000, Baker}.
In recent years, numerous studies have focused on chemotaxis-related models. For comprehensive reviews of this literature, see, for instance, \cite{Bellomo2015, Arumugam2020,Bellomo2022,MR4673380}. 

In cases where a chemical signal attracts cells that they also consume, a useful energy law emerges due to the interplay between the chemotactic and consumption terms. This effect has been explored in previous studies, allowing a priori estimates of possible solutions.  Global solutions have been found for three-dimensional domains in \cite{MR2860628}, and, with the addition of a logistic term on the cell density, the results were extended to higher-dimensional cases in \cite{MR3690294}. Some closely related considerations also lead to several other works such as \cite{MR3661546,MR3411100,MR4513095,MR4545829}. 

Recently,  an chemoattractive model with a potential consumption rate, 
was introduced in \cite{Correa_Vianna_Filho2023-gz} as
\begin{equation}
 \left\{ \ \begin{aligned}
 &n_t  =\Delta n-\nabla \cdot(n  \nabla c), \\
  &c_t =\Delta c-n^s c,
\end{aligned}\right.  
\end{equation}
where $n=n(t,x)\ge 0$ is the cell density  and $c=c(t,x)\ge 0$ the chemical concentration, for any $ x \in \Omega$ a bounded domain, and $ t >0$. The term $\nabla \cdot(n  \nabla c)$ corresponds to the chemotactic attraction and $n^s$ is the consumption rate of signal $c$, with $s \geq 1$. In \cite{Correa_Vianna_Filho2023-gz}, global weak solutions, uniformly bounded in time, are found in three-dimensional domains with minimal regularity, via a  continuous regularization of the problem. By the contrary,  a convergent time discretization is studied in \cite{Guillen-Gonzalez2023-gy} for the same problem.

In the presence of fluid flow, a buoyancy effect known as bioconvection occurs when bacterial populations exhibit collective behavior in an incompressible fluid, as discussed in \cite{MR2149639}. This effect, combined with chemotactic behavior may occur; this is the case, for example, where aerobic bacteria in water droplets search for oxygen \cite{PhysRevLett.93.098103}. This buoyancy effect couples cell density with a fluid velocity field, where additionally convective terms appear. This macro-scale chemotaxis fluid model has been extensively studied in various works such as \cite{MR3208807,MR2679718}.

In the present work, we focus on this kind of attraction-consumption chemotaxis models, in the presence of fluid flow by integrating the Navier-Stokes (NS) equations for an incompressible flow, for the velocity field $u=u(t,x)\in \mathbb{R}^3$ and pressure $P=P(t,x)\in \mathbb{R}$. This is achieved by introducing convective terms for both $n$ and $c$ equations and incorporating a source term into the NS equations, accounting for the gravitational effect of the heavier bacteria on the flow.  Moreover, homogeneous Neumann boundary conditions for both the cell
density $n$ and the chemical concentration  $c$ are considered, jointly with Dirichlet boundary conditions for the velocity field $u$. Then the system remains as 
\begin{equation}  \label{Sist cont}
 \left\{ \ \begin{aligned}
 &n_t+ u \cdot \nabla n  =\Delta n-\nabla \cdot(n  \nabla c), \\
 &c_t  + u \cdot \nabla c= \Delta c- n^s c, \\
 &u_t +u \cdot \nabla u  +\nabla P  =\Delta u + n\nabla \Phi,  \\
 &\nabla \cdot u  =0, 
 \\ 
 &\left.\partial_\eta n\right|_{\Gamma}=\left.\partial_\eta c \right|_{\Gamma}= \left. u\right|_{\Gamma}= 0 , \quad n(0) = n^0, \quad c(0) = c^0, \quad u(0) = u^0.
\end{aligned}\right.  
\end{equation}
Here, $ \eta$ represents the outward normal vector to the boundary $ \Gamma$ of the domain $ \Omega$.
The gravitational potential, $\Phi=\Phi(x)$, is assumed to be a  given $W^{1,\infty}(\Omega)$ function. Finally, $n^0, c^0, u^0$ are the initial data.

The case  $s=1$ has been treated first in convex domains, obtaining global classical solutions in two-dimensional domains \cite{Winkler2012}; and after were extended to non-convex domains in \cite{Jiang2015}. 
Global weak solutions have also been obtained in three-dimensional domains in \cite{Winkler2016}.

The objective of this work is twofold: on the one hand to extend the existing results of \cite{Winkler2016} to the case of potential consumption rate $ n^s $ with $s > 1$ and through a convergent time discrete scheme, and on the other hand extending the results of the system without fluid given in  \cite{Guillen-Gonzalez2023-gy}  to the coupled chemotaxis-fluid system.
 %  proving the existence of global weak solutions to system \eqref{Sist cont}. 
%The key point is to make use of the energy structure of the Keller-Segel equations as presented in \cite{Guillen-Gonzalez2023-gy} combined with the approach of the coupling chemotaxis-fluid as in \cite{Winkler2012}. 
More specifically, the main contributions of this paper are the following:
\begin{enumerate}
    \item[(i)] 
    Getting the convergence of a time discrete scheme towards weak solutions of system \eqref{Sist cont}, using a new truncation operator for the chemotaxis and consumption terms and also truncating the source term from the Navier-Stokes system. 
    
    \item[(ii)]  Proving the existence of global weak solutions for the chemotaxis-Navier-Stokes model with potential consumption $n^sc$ for $s > 1$ which are uniformly
bounded in time. 
%     Additionally, we discuss the regularizing effect of increasing the potential rate, \textcolor{red}{noting that asymptotic in time estimates are only obtained  for $s \geq 2$}.  
\end{enumerate}

It is worth mentioning that although the numerical simulation of chemotaxis models is a significant and growing area of research, there are relatively few studies focused on the numerical approximation for chemoattraction-consumption
models. Beyond \cite{Correa_Vianna_Filho2023-gz}, to the best of our knowledge, the case $s=1$ has been addressed in only two studies: \cite{Guillen-Gonzalez2024-nb} and \cite{Duarte-Rodriguez2021-wx}. The former investigates several finite element schemes, while the latter develops numerical approximations for the system coupled with the Navier–Stokes equations.
This shortage may also be linked to the complex interaction between chemotaxis and consumption effects, as well as the difficulty for adapting theoretical frameworks into numerical approaches. 
Moreover, constructing convergent schemes is particularly demanding, as it requires carefully balancing the competition effects between  chemoattraction and consumption in the derivation of energy estimates.

This paper is organized as follows. Section \ref{sec main} presents the main result and the time discrete scheme. For that, we introduce a new truncation operator and the corresponding time discrete regularized system of \eqref{Sist cont}. Then 
 the existence of global weak solutions of \eqref{Sist cont} is stated. 
Section \ref{secao 2} focuses on preliminary results that are either referenced or proved. Keys are 
Lemma~\ref{lema GNS produto}, which deals with the source term $n \nabla \Phi$ of the fluid system and 
Lemma \ref{Lema dif T G} clarifies the relationship between the direct upper truncation $T_0^m$ and the new one $G_0^m$, given in \eqref{T0m}-\eqref{G0m} below.
Section \ref{secao 3} treats the existence of the time discrete regularized problem which is obtained by a fixed point argument.
Section \ref{secao 4} presents some a priori estimates based on an adequate energy inequality. This is done separately for $s \in (1,2)$ and $s \geq 2$ because the energy becomes singular with respect to cell variable $n$ for the case $s<2$.
 Section \ref{secao 5} shows the passage to the limit of the time discrete regularized system towards weak solutions of \eqref{Sist cont}. %\textcolor{red}{and additional  estimates up to infinity time are proved for the case $s\geq 2$}.

\section{Main Theorem and time discrete scheme} \label{sec main}
Here and henceforth $\Omega$ is assumed to be a bounded domain in $\mathbb{R}^3$ with boundary  $\Gamma$ sufficiently regular as specified below.
For simplicity, we will omit the domain in the notation of functional spaces. That is, we will denote, for instance, $L^2(\Omega)$ simply as $L^2$ from now on.

We start by introducing some usual spaces in fluid problems. Let 
$$
\mathcal{V} \coloneqq \{ v \in C^\infty_0(\Omega)^3, \nabla \cdot v = 0\}.
$$
Denoting by  $ \overline{\mathcal{V}}^{X}$ the closure of $\mathcal{V}$ in the $X$-norm, we consider
\begin{align}
 &H \coloneqq \overline{\mathcal{V}}^{L^2} = \{ v \in L^2, \nabla \cdot v = 0, \left.v\cdot \eta\right|_{\Gamma} = 0 \}, \\
 & V \coloneqq \overline{\mathcal{V}}^{H^1_0} = \{ v \in H^1_0, \nabla \cdot v = 0\}.
\end{align}
 Both identifications hold true for bounded Lipschitz domains \cite{Boyer2012-pq}. Moreover,  one has the Gelfand Triple
$
V \subset H \subset V',
$
with compact and dense embeddings.
We also define $W^{1,p}_{0,\sigma} \coloneqq \overline{\mathcal{V}}^{W_0^{1,p}}$ for $1<p<\infty$.

The main properties of problem \eqref{Sist cont} are nonnegative variables $n,c\ge 0$, $n$-conservation, $c$-pointwise estimates and an energy inequality letting appropriate estimates of all variables $n,c$ and $u$.  

We are going to impose conditions on the regularity of the domain $\Omega$ such that the boundary terms on the energy inequality associated to problem \eqref{Sist cont}  are dealt with.  Specifically, we impose the following hypotheses:

\

\noindent
{\bf (H2)-regularity:}  
{\sl For any $f \in L^2$, there exists a unique $z \in H^{2}$ solution to the Poisson-Neumann problem
\begin{equation} \label{poisson neuman}
-\Delta z+z  =f \quad \text { in } \Omega, \quad
\left.\partial_\eta z \right|_{\Gamma} =0.
\end{equation}
Moreover, there exists $C>0$ such that 
$$
\|z \|_{H^{2}} \leq C\|-\Delta z+z\|_{L^2}.
$$
}

\noindent
{\bf (H2)-approx.:}  {\sl
%\begin{hyp}\label{hipotese h1}
    For any $z \in H^2$ with $\left.\partial_\eta z \right|_{\Gamma} =0$, there is a sequence $ \left\{\rho_n\right\} \subset C^2(\overline{\Omega}) $ such that $\left.\partial_\eta \rho_n \right|_{\Gamma} =0$ and  $\rho_n \rightarrow z $ in $ H^2$.
%\end{hyp}
}

\

%The conditions outlined in Definition \ref{def reg}, and Hypothesis {\bf (H)}, can be viewed as conditions on the regularity of the domain $\Omega$.
Note that, as shown in \cite{Correa_Vianna_Filho2023-gz}, it 
suffices require $\Gamma$ to be at least $C^{1,1}$ to both hypotheses hold.

\

We now introduce what is meant by a weak solution to problem \eqref{Sist cont}.

\begin{mydef}[Weak solution] \label{def sol}
  % Let $n^0\ge0, c^0\ge 0$ a.e. in $\Omega$ and $u^0\in H$.  
    A triplet $(n,c,u)$ is a weak solution in $(0,\infty)\times \Omega$ of system \eqref{Sist cont} if the following features hold:
    \begin{itemize}
\item  (nonnegative) $n,c\ge 0$ \ a.e. in $(0,\infty)\times \Omega$, 
\item  ($n$-conservation)
$$\int_\Omega n(t,\cdot) = \int_\Omega n^0 \quad \text {a.e. } t \in(0, \infty),  $$
\item ($c$-pointwise estimates)
$$0 \leq c(t,x) \leq \norm{c^0}_{L^\infty } \quad\text {a.e. } (t,x) \in(0, \infty)\times \Omega, $$
\item ($c$-weak estimates)
$$
\nabla c\in L^2(0,\infty;L^2),
$$
\item (energy regularity) %\textcolor{red}{
% {\bf K:  la regularidad de energia es }
% $$
% (n+1)^{s/2}, \nabla c, u \in W_2, 
% \quad
% n^{s/2}\nabla c \in L ^2, 
% \quad
% \nabla c \in L^4
%  $$
%  {\bf K: para $s>2$, probar la regularidad $\nabla n, n \nabla c \in L^2 (0,T;L^2)$ se puede obtener directamente de $\nabla (n+1)^{s/2}\in L ^2$ y $n ^{s/2}\nabla c\in L^2$, usando $s>2$}
% }
%% W_2 = L^\infty,L^2 \cap L^2,H^1
\begin{align*}
   n & \in L^\infty(0,\infty;L^s) \cap L^{5s/3}_{loc}(0,\infty;L^{5s/3}),\\
  c &\in L^\infty(0, \infty ; H^1) \cap L^2_{loc}(0, \infty ; H^2), \quad \nabla c \in L^4_{loc}(0, \infty ; L^4),
    \\
 u &\in L^\infty(0,\infty;H) \cap L^2_{loc}(0,\infty;V),
\end{align*} 
\item (flux regularity)    
$$ n \nabla c,\    \nabla n \in  
 L^{5s/(3+s)}_{loc}(0, \infty; L^{5s/(3+s)}),
 \quad \hbox{for $s \in (1,2)$,}
$$
$$
 n\nabla c,\   \nabla n \in L^2_{loc}(0,\infty;L^2),
  \quad \hbox{for $s \in [2,\infty)$,}
$$
 
%%%% Compact s>=2 and s<2 and keep a single bracket with n\nabla c and \nabla n
% \begin{itemize}
% \item for $s \in (1,2)$:
% \begin{equation}
%     \left\{\begin{aligned}
%     n &\in L^\infty_{loc}(0,\infty;L^s) \cap L^{5s/3}_{loc}(0,\infty;L^{5s/3}),\\
%  n \nabla c,\    \nabla n &\in 
%  %L^2_{loc}(0,\infty;L^s) \cap 
%  L^{5s/(3+s)}_{loc}(0, \infty; L^{5s/(3+s)}),
%  % \\ n \nabla c &\in L^2_{loc}(0,\infty;L^s),
%     \end{aligned}\right.
% \end{equation}
% \begin{equation}
%     \left\{\begin{aligned}c &\in L^\infty_{loc}(0, \infty ; H^1) \cap L^2_{loc}(0, \infty ; H^2), 
%     \\
% \nabla c &\in 
% L^4_{loc}(0, \infty ; L^4),
%     \end{aligned}\right.
% \end{equation}
%  \begin{equation}
%      u \in L^\infty_{loc}(0,\infty;H) \cap L^2_{loc}(0,\infty;V);
%  \end{equation}
%  %
% \item for $s \geq 2$:
% \begin{equation}
%     \left\{\begin{aligned}
%     n &\in L^\infty(0,\infty;L^s) \cap L^{5s/3}_{loc}(0,\infty;L^{5s/3}), \\
%  n \nabla c,\   \nabla n &\in L^2_{loc}(0,\infty;L^2), \\
% %n \nabla c &\in L^2_{loc}(0,\infty;L^2),
%     \end{aligned}\right.
% \end{equation}
% \begin{equation}
%     \left\{\begin{aligned}
%     c &\in L^\infty_{loc}(0, \infty ; H^1) \cap L^2_{loc}(0, \infty ; H^2), \\
% \nabla c &\in L^\infty(0, \infty ; L^2) \cap L^4_{loc}(0, \infty ; L^4),
%     \end{aligned}\right.
% \end{equation}
%  \begin{equation}
%      u \in L^\infty(0,\infty;H) \cap L^2_{loc}(0,\infty;V),
%  \end{equation}
% \end{itemize}
\item
and $(n,c,u)$ satisfies
    \begin{align*}
        \int_\Omega n_t  \phi  - \int_\Omega n\,  u \cdot\nabla \phi   + \int_\Omega \nabla n \cdot\nabla \phi  &=\int_\Omega n   \nabla c  \cdot \nabla \phi  ,\quad \forall\, \phi \in W^{1,10}, \text { a.e. } t \in(0, \infty),  \\
 c_t +u \cdot \nabla c  -\Delta c &= -n^s c, \quad \text { a.e. }(0, \infty) \times \Omega, \\
 \int_\Omega u_t  \varphi  +\int_\Omega (u \otimes u) \nabla \varphi  
 + \int_\Omega \nabla u\cdot \nabla \varphi  &= \int_\Omega n \nabla \Phi\cdot \varphi , \quad \forall \,\varphi \in 
 % V
 W^{1,5/2}_{0,\sigma},  \text { a.e. } t \in(0, \infty),
    \end{align*}
and the initial conditions $(n,c,u)|_{t=0}=(n^0,c^0,u^0)$. Hereafter, $(u \otimes u)_{ij} = u_iu_j$.
\end{itemize}
\end{mydef}

% \textcolor{red}{\begin{remark} Observe the change of regularity from finite time to infinite time in cases $s \in (1, 2)$ 
% and $s \ge 2$ respectively. Concretely, for $s \ge 2$ one has the global in-time regularity $n \in L^\infty(0,\infty;L^s)$, $\nabla c \in L^\infty(0,\infty;L^2)$ and $u \in L^\infty(0,\infty;H)$, which are only ``loc'' for $s \in (1, 2)$.
% \Large{Now this does not change with $s$}
% \end{remark}}

\begin{remark}
    With the  regularity given in previous definition and looking at the system \eqref{Sist cont}, the following time derivative regularity hold:
    $$n_t \in L^{10/9}_{loc}(0,\infty;(W^{1,10})')\quad \hbox{for  $s<2$,} 
    \quad n_t \in L^{5/3}_{loc}(0,\infty;(W^{1,5/2})') \quad \hbox{for $s\geq 2$,}
    $$
    $$
    c_t \in L^{5/3}_{loc}(0,\infty;L^{5/3}),
    \quad \hbox{and} \quad 
    u_t \in 
    %L^{4/3}_{loc}(0,\infty;V').
    L^{5/3}_{loc}(0,\infty;(W^{1,5/2}_{0,\sigma})^\prime).
    $$
     In particular, the initial conditions $(n,c,u)|_{t=0}=(n^0,c^0,u^0)$ have  sense.
\end{remark}

%Having defined what we mean by a weak solution 
We may now state our main result.
\begin{theorem} \label{resultado principal}

    Let $\Omega \subset \mathbb{R}^3$ be a bounded domain satisfying 
    %$H^2$-regularity on the Poisson-Neumann Problem and hypothesis {\bf (H)}. 
    {\bf (H2)-regularity} and  {\bf (H2)-approx.}. 
    Let $n^0\in L^s$,  $c^0\in H^1 \cap L^\infty$ with $n^0\ge0, c^0\ge 0$ a.e.\ in $\Omega$, and $u^0 \in H$. 
Then, one has a time discrete problem (see \eqref{Sist Discretizado} below) which is convergent towards   a weak solution $(n,c,u)$ in $(0,\infty)\times \Omega$ of  problem \eqref{Sist cont}.
% in the sense of Definition \ref{def sol}). 
%with $n,c\ge 0$ a.e.\ in $(0,\infty)\times \Omega$, and such that
%    $$\left\{\begin{aligned}
%&\int_\Omega n = \int_\Omega n^0 \quad \text {a.e. } t \in(0, \infty),  \\
%&0 \leq c(t,x) \leq \norm{c^0}_{L^\infty } \quad\text {a.e. } (t,x) \in(0, \infty)\times \Omega. 
%\end{aligned}\right. $$
\end{theorem}

\subsection{Definition of the time discrete scheme}

A convenient way to rewrite the equations is by setting an adequate change of variables, changing the chemical variable $c$ by the auxiliary variable 
$z = \sqrt{c + \alpha^2}$ for some $\alpha >0$, which simplifies the test functions involved for obtaining an energy law of system \eqref{Sist cont}. In fact,  $c$-equation $\eqref{Sist cont}_2$ is rewritten as  
$$
z_t+u\cdot\nabla c -\Delta z -\frac{|\nabla z|^2}{z}
= -\frac12 n^s \left( z-\frac{\alpha^2}z \right).
$$
% are simpler. 

A time discretization approach is considered in \cite{Guillen-Gonzalez2023-gy} in the case of the corresponding chemotaxis model, but without the interaction with a fluid flow. A regularization to this system is made with upper and lower truncations in order to have bounded terms and prevent division by zero, respectively. 
%both to prevent division by zero, and to  have bounded terms. 
For that reason, we define the lower truncation for $z$ as
\begin{equation} \label{Talpha}
    T_\alpha(z)=\left\{\begin{array}{cl}
\alpha & \text { if } z \leq \alpha, \\
z & \text { if } z \geq \alpha,
\end{array}\right.
\end{equation}
and the lower-upper truncation for $n$,

%\begin{multicols}{2}
\begin{equation} \label{T0m}
   T_0^m(n)=\left\{ \ \begin{array}{cl}
0 & \text { if } n \leq 0, \\
n & \text { if } n \in [0, m], \\
m & \text { if } n \geq m .
\end{array}\right. 
%\end{equation}
%\begin{equation}
\end{equation}
%\end{multicols}

In this work, a new regularization is proposed.  Compared with 
\cite{Guillen-Gonzalez2023-gy}, this regularization fits better concerning the test functions employed in the $n$ and $z$ equations. To do that, we define the new lower-upper truncation operator but for $n^s/s$,
\begin{equation} \label{G0m}
G^m_0 (n) = \left\{ \ \begin{array}{ll}
 0 &\text{ if } n\leq 0, \\
 \frac{1}{s}n^{s} & \text{ if } n \in [0, m], \\
 \frac{m}{s-1} n^{s-1} - \frac{m^s}{s(s-1)} & \text{ if } n\geq  m .
\end{array}\right. 
\end{equation}
In fact,  $G^m_0 (n)$ is defined such that $G^m_0 \in C^1(\mathbb{R})$ and satisfying the important equality
\begin{equation} \label{eq Gm Tm}
    (G^m_0)' (n) = T^m_0 (n) n^{s-2} 1_{\{n \geq0\}}.  
\end{equation}

At this point, 
we consider the  truncated  
%Keller-Segel-Navier-Stokes 
$(n,z,u)$-system as follows
\begin{equation}   \label{Sist cont reg}
 \left\{ \ \begin{array}{l}
 n_t+ u \cdot \nabla n  -\Delta n + \nabla \cdot(T_0^m(n)  2T_\alpha(z)\nabla z)=0, \\
 \displaystyle z_t  + u \cdot \nabla z- \Delta z - \frac{\abs{\nabla z}^2}{T_\alpha(z)} 
 =- \frac{s }{2}G^m_0(n)\left(z - \frac{\alpha^2}{T_\alpha(z)} \right), \\
 u_t +u \cdot \nabla u  -\Delta u +\nabla P  = T_0^m(n)\nabla \Phi,  \\
 \nabla \cdot u  =0, \\ 
  \left.\partial_\eta n\right|_{\Gamma}=\left.\partial_\eta z \right|_{\Gamma}= \left. u\right|_{\Gamma}= 0 , \quad n(0) = n^0, \quad z(0) = z^0, \quad u(0) = u^0,
\end{array}\right.  
\end{equation}
where $z^0 = \sqrt{c^0 + \alpha^2}$.

As for the time discretization, we will divide the interval $[0, \infty)$ into subintervals denoted by $I_i=\left[t_{i-1}, t_i\right)$, with $t_0=0$ and $t_i=t_{i-1}+k$, where $k>0$ is the time step. We use the notation
$$
\delta_t n^i=\frac{n^i-n^{i-1}}{k}, \ \forall i \geq 1,
$$
for the discrete time derivative of a sequence $n^i$.
Then, we define the following Backward Euler time scheme of truncated system \eqref{Sist cont reg} with semi-implicit approximation of the convection of the fluid: given $(n^{i-1},z^{i-1},u^{i-1})$, to find $(n^{i},z^{i},u^{i},P^i)$ solving
\begin{equation}  
  \label{Sist Discretizado} 
 %\tag{$I_k^m$} 
 \left\{ \ \begin{aligned}  
&\delta_t n^i+u^i \cdot \nabla n^i  
-\Delta n^i
+\nabla \cdot(T_0^m(n^i)  2T_\alpha(z^i)\nabla z^i)=0, \\
&\delta_t z^i+u^{i} \cdot \nabla z^i 
-\Delta z^i
- \frac{\abs{\nabla z^i}^2}{T_\alpha(z^i)}
=-\dfrac{s}{2} G^m_0(n^i)\left( z^i - \dfrac{\alpha^2} {T_\alpha(z^i)}  \right) 
,  \\
&\delta_t  u^i
+u^{i-1} \cdot \nabla u^i 
-\Delta u^i
+\nabla P^i
 =
T_0^m(n^{i}) \nabla \Phi,  \\
 &\nabla \cdot u^i =0, 
\\ 
  &\left.\partial_\eta n^i \right|_{\Gamma}=\left.\partial_\eta z^i \right|_{\Gamma}
  = \left. u^i \right|_{\Gamma}= 0.
\end{aligned}\right.
\end{equation}
In fact, \eqref{Sist Discretizado} is a fully coupled and nonlinear time scheme.

Since $z^0 = \sqrt{c^0 + \alpha^2}\ge \alpha$ and $n^0\ge 0$ in $\Omega$, we will determine that  $z^i \geq \alpha$ and $n^i \geq 0$ in $(0,\infty)\times \Omega$, hence we will drop the lower truncations of $z^i$ by $\alpha$ and $n^i$ by $0$ 
in \eqref{Sist Discretizado}. Thus, to return to the original system \eqref{Sist cont}, we will check what happens to the solution of \eqref{Sist Discretizado} as $(m,k) \to (\infty,0)$.

In order to get energy estimates (see section \ref{est lin por partes}) we will need to impose more regular initial data for the time discrete problem  \eqref{Sist Discretizado} than $(n^{0}, z^{0}, u^{0})\in L^s\times (H^1\cap L^\infty) \times H$ imposed in Theorem \ref{resultado principal}. In fact, we consider a sequence 
$(n^{0}_j, z^{0}_j, u^{0}_j)$ satisfying the following features: 
\begin{itemize}
\item   (regularity)  
\begin{equation}  \label{reg n0}  
 n^0_j \in   H^1\cap L^s,
 \quad  z^0_j  \in  H^2 ,
 \quad u^0_j \in V ,
\end{equation}
\item (nonnegative and conservation) 
\begin{equation}  \label{compat-0}  
n^{0}_j \ge 0, \ z^{0}_j \ge \alpha \ \text{ in $\Omega$},
\quad \int_\Omega n^{0}_j = \int_\Omega n^{0},
\end{equation}
\item (approximation) 
\begin{equation}  \label{approx-0}  
(n^{0}_j, z^{0}_j, u^{0}_j) \to (n^{0}, z^{0}, u^{0})\quad   \text{in $L^s\times (H^1\cap L^\infty) \times H$, as $j\to \infty$}.
\end{equation}
\end{itemize}
Finally, to get that all estimates of this paper be $j$-independent, it suffices to  consider for each $j$ a small  enough time step $ k(j)$ such that the following estimates hold:
  \begin{equation}  \label{reg u0}
  \begin{aligned}
  k (j)\, \| n^0_j \|_{W^{1,5s/(s+3)} }^{5s/(s+3)}\le 1 \ \text{ if } s\le 2, 
& \qquad 
 k(j) \, \|n^0_j \|_{H^1}^2 \le 1 \ \text{ if } s\ge 2, \\
  k(j)\, \| z^0_j\|_{H^2}^2 \le 1, &
   \qquad k(j)\, \| u^0_j\|_{V}^2 \le 1.  \end{aligned}
\end{equation}
 By simplicity in the notation, from now on we omit the index $j$.

\section{Preliminary Results}\label{secao 2}

This section presents several technical results that will be essential tools throughout the paper.

Lemmas \ref{lema d2 z} through \ref{lema conv raiz}, previously established and proven in \cite{Correa_Vianna_Filho2023-gz,Guillen-Gonzalez2023-gy}, are restated here for completeness.
%
% \begin{lema} \label{lema 2.1}
%     Let $\Omega$ be a bounded Lipschitz domain such that the Poisson-Neumann problem \eqref{poisson neuman} has $H^{2}$-regularity. There is a constant $C>0$ such that
%     $$
%     \|\nabla z\|_{H^{1}} \leq C\|\Delta z\|_{L^2}, \quad \forall z \in H^{2} \text { such that }\left.\partial_\eta z\right|_{\Gamma}=0 \text {. }
%     $$
% \end{lema}
%
The following Lemma is used to obtain an energy inequality without a boundary term otherwise present.
\begin{lema} \label{lema d2 z}
   Suppose  {\bf (H2)-regularity} and  {\bf (H2)-approx.}. Let $z \in H^2$ be such that $\left.\partial_\eta z\right|_{\Gamma}=0 $ and $z \geq \alpha$ for some $\alpha>0$. Then there exist positive constants $C_1, C_2>0$, independent of $\alpha$, such that
$$
\int_{\Omega}|\Delta z|^2 d x+\int_{\Omega} \frac{|\nabla z|^2}{z} \Delta z d x \geq C_1\left(\int_{\Omega}\left|D^2 z\right|^2 d x+\int_{\Omega} \frac{|\nabla z|^4}{z^2} d x\right)-C_2 \int_{\Omega}|\nabla z|^2 d x .
$$
\end{lema}

As for the time discretization, when dealing with time discrete derivatives the following holds.
\begin{lema} \label{lema derivada discreta}
     Let $z^n, z^{n-1} \in L^{\infty}$, and let $f: \mathbb{R} \rightarrow \mathbb{R}$ be a $C^2$ function.
Then
$$
\int_{\Omega} \delta_t z^n(x) f'(z^n(x)) dx =\delta_t \int_{\Omega} f(z^n(x)) d x+\frac{1}{2 k} \int_{\Omega} f''(b^n(x))\left(z^n(x)-z^{n-1}(x)\right)^2 d x,
$$
where $b^n(x)$ is an intermediate point between $z^n(x)$ and $z^{n-1}(x)$.
\end{lema}

To establish the convergence of the powers of a function, we will utilize the following result.

\begin{lema}\label{lema conv raiz}
    Let $p \in(1, \infty)$, and let $\left\{w_n\right\}$ be a sequence of nonnegative functions such that $w_n \rightarrow w$ in $L^p(0, T ; L^p)$ as $n \rightarrow \infty$. Then, for every $r \in(1, p), w_n^r \rightarrow w^r$ in $L^{p / r}(0, T ; L^{p / r})$ as $n \rightarrow \infty$.
\end{lema}

We also make use of 
the classical version of Aubin-Lions theorem for compactness \cite{Boyer2012-pq}.
\begin{lema} \label{Lema Aubin Lions}
    Let $X$, $B$ and $Y$ be three Banach spaces with $X \subset B \subset Y$. Suppose that $X$ is compactly embedded in $B$ and  $B$ is continuously embedded in $Y$.
For $1 \leq p, q \leq \infty$, let
$$
W=\left\{u \in L^p(0, T ; X) \mid \partial_t{u} \in L^q(0, T ; Y)\right\} .
$$
\begin{itemize}
    \item[i)] If $p<\infty$ then the embedding of $W$ into $L^p(0, T ; B)$ is compact.
    \item[ii)] If $p=\infty$ and $q>1$ then the embedding of $W$ into $C([0, T] ; B)$ is compact.
\end{itemize}
\end{lema}
% And second a result given in \cite{Simon1986-te}
% \begin{lema} \label{LemaSimon}
%     Let $X, B$, and $Y$ be Banach spaces as in previous Lemma, and let
% $$
% F \subset\left\{f \in L^1(0, T ; Y) \mid \partial_t f \in L^1(0, T ; Y)\right\} \quad \text { and } \quad \partial F / \partial t=\left\{\partial_t f \mid  f \in F\right\} .
% $$
% %Suppose that $X \subset B \subset Y$, with compact embedding $X \subset B$ and continuous embedding $B \subset Y$. 
% Assume the set $F$ be bounded in $L^p(0, T ; B) \cap L^1(0, T ; X)$ for $1<p \leq \infty$, and the set $\partial F / \partial t$ be bounded in $L^1(0, T ; Y)$. Then $F$ is relatively compact in $L^q(0, T ; B)$ for $1 \leq q<p$.
% \end{lema}
% We use a version of the Gagliardo-Nirenberg-Sobolev inequality for bounded Lipschitz domains proved in \cite{Li2022-ly}:
% \begin{lema} \label{lema GNS}
%      Assume that $1 \leq q, r \leq+\infty$, $k, j \in \mathbb{N}$ with $j<k, j / k \leq \theta \leq 1$ and $p \in \mathbb{R}$ such that

% $$
% \frac{n}{p}-j=\theta\left(\frac{n}{r}-k\right)+(1-\theta) \frac{n}{q}
% $$

% Then there exists a constant $C$ depending only on $n, k, q, r, \theta$ and $\Omega$ such that for any $y \in W^{k, r}\cap L^q$,

% $$
% \norm{\nabla^j y}_{L^p} \leq C\norm{\nabla^k y}_{L^r}^\theta \norm{y}_{L^q}^{1-\theta}+C\norm{y}_{L^q}
% $$

% with the exception that if $1<r<+\infty$ and $k-j-n / r \in \mathbb{N}$, we must take $j / k \leq$ $\theta<1$.
% \end{lema}
We present a discrete version of uniform in time Gronwall's estimates.
\begin{lema} \label{Lema Gronwall Discreto}
Consider a sequence of inequalities
$$
\delta_t a^n + \lambda \, a^n \leq C , \quad \forall\, n\ge 1,
$$
where $\lambda >0$ and $\{a^n\}_{n \in \mathbb{N}} \subset \mathbb{R}_+$. 
Then, 
 the following estimate holds
$$
a^n \leq (1+\lambda k)^{-n} a^0 + \frac{C}{\lambda} \left(1 - (1+\lambda k)^{-n} \right),
\quad \forall\, n\ge 1.
$$
In particular, 
$$
a^n \leq a^0 + \frac{C}{\lambda}, \quad \forall\, n\ge 1.
$$
\end{lema}
\begin{proof}
We adapt  a proof given in \cite{Emmrich2000DiscreteVO}. 
    Let $\tilde{a}^n \coloneqq (1+\lambda k)^{n} a^n$. Then, from the recursive inequality 
    $$
\delta_t \tilde a^n \leq (1+\lambda k)^{n-1} C , \quad \forall\, n\ge 1,
$$
%$$
%    \begin{align*}
%        \frac{\tilde{a}^{n} - \tilde{a}^{n-1}}{\Delta t} &= \frac{(1-\lambda \Delta t)^{n-1}}{\Delta t} ((1-\lambda \Delta t)a^n - a^{n-1}) \\
%        &= (1-\lambda \Delta t)^{n-1} \left( \frac{a^n - a^{n-1}}{\Delta t} - \lambda a^n\right) \\
%        &\leq (1-\lambda \Delta t)^{n-1} C,
%    \end{align*}
%by assumption. 
Now, summing over $n$ leads to
%$$
%\frac{\tilde{a}^{n} - \tilde{a}^{0}}{\Delta t} \leq \sum_{j=0}^{n-1} (1-\lambda \Delta t)^{j} C.
%$$
%Therefore,
\begin{equation*}
    \tilde{a}^n \leq  \tilde{a}^0 + C \, k \sum_{j=0}^{n-1} (1+\lambda k)^{j} 
    = a^0 + \frac{C}{\lambda } (  (1+\lambda k)^{n} - 1),
\end{equation*}
and finally
$$
a^n \leq(1+\lambda k)^{-n} a^0
+ \frac{C}{\lambda}\left(1-(1+\lambda k)^{-n}\right) . 
$$
%Now in the case $\lambda <0$, 
%Note that $(1+\lambda k)^{-n} < 1$, and considering the sign in the second term, the estimate follows.
\end{proof}
\begin{lema} \label{lema GNS produto}
    Let $\Omega \subset \mathbb{R}^d$ be a bounded Lipschitz domain, with $d \leq 3$. Let also $f,g$ be functions defined on $\Omega$ such that 
   $$\int_\Omega  |f| = K, \quad   |f|^{s/2} \in H^1 \ (s>1)\quad \text{and}\quad  g \in H^1_0.
   $$
    Then for any $\epsilon_1>0$ and $\epsilon_2>0$ there exists $C>0$  such that
    $$
    \int_\Omega \abs{f}\abs{g} \leq  \epsilon_1 \norm{\nabla g}_{L^2}^2 
    + \epsilon_2 \norm{\nabla |f|^{s/2}}_{L^2}^2 + C,
    $$
    where $C$ depends on $\epsilon_1$, $\epsilon_2$ and $\int_\Omega  |f|$.
\end{lema}
This lemma will be used to deal with the source term $T_0^m(n^{i}) \nabla \Phi$ on the fluid velocity equation, when it is multiplied by $u^i$, by taking $f=T_0^m(n^{i})$ and $g=u^i$.

\begin{proof}
  By using the embedding $H^1 \hookrightarrow L^6$, the   equivalence between $\|g\|_{H^1}$ and  $\|\nabla g\|_{L^2}$ for any $g\in H^1_0$ and Young's inequality, 
       $$
       \int_\Omega \abs{f}\abs{g} \leq  \norm{f}_{L^{6/5}} \norm{g}_{L^6}
       \le C \norm{|f|^{s/2}}^{2/s}_{L^{12/(5s)}}  \norm{ \nabla g}_{L^2}
    \le
    C \norm{|f|^{s/2}}^{4/s}_{L^{12/(5s)}} 
       +  \epsilon_1 \norm{ \nabla g}_{L^2}^2.
       $$
       Since $2/s<12/5s<6  $, by interpolating
  $$ 
        \norm{|f|^{s/2}}^{4/s}_{L^{12/(5s)}} \leq \left( \norm{|f|^{s/2}}^a_{L^6} \right)^{4/s} \left( \norm{|f|^{s/2}}^{(1-a)}_{L^{2/s}} \right)^{4/s}  
      =\norm{|f|^{s/2}}^{4a/s}_{L^6} \left(\int_\Omega \abs{f} \right)^{2(1-a)}
    $$
    where  $a = s/2(3s-1)$. Then, $4a/s=2/(3s-1)<1$ for any $s>1$, so applying Young's inequality and again the embedding $H^1 \hookrightarrow L^6$,
      \begin{align}
       \int_\Omega \abs{f}\abs{g} 
%\norm{|f|^{s/2}}^{4a/s}_{L^{12/(5s)}}
 &\leq \epsilon \norm{|f|^{s/2}}_{L^6}^2 + C(K) +  \epsilon_1 \norm{ \nabla g}_{L^2}^2
\nonumber  \\ \label{interm}
  &\le \epsilon \norm{\nabla |f|^{s/2}}_{L^2}^2 
      + \epsilon \norm{ |f|^{s/2}}_{L^2}^2
      + C(K) +  \epsilon_1 \norm{ \nabla g}_{L^2}^2.
\end{align}
 By using  Poincar\'e's inequality we have
      \begin{equation} \label{Poinc}
       \norm{|f|^{s/2}}_{L^2}^2 \leq C \norm{\nabla |f|^{s/2}}^2_{L^2} 
       + C \norm{|f|^{s/2}}^2_{L^1} .
      % \\ &\leq C \norm{\nabla |f|^{s/2}}^2_{L^2} + \epsilon \norm{|f|^{s/2}}_{L^2}^2+ C(K),
       \end{equation}
      The remaining term $\norm{|f|^{s/2}}^2_{L^1}$ is dealt separately in the cases $s\leq 2$ and $s>2$. Indeed, if $s \leq 2$, then 
      $$
      \norm{|f|^{s/2}}_{L^1} ^2 =\left(\int_\Omega |f|^{s/2}\right)^2 \leq \norm{f}_{L^1}^{s}\abs{\Omega}^{(2-s)} \leq C(K).
      $$
     If $s>2$, by interpolation and Young inequalities,   
       \begin{align*}
        \norm{|f|^{s/2}}_{L^1}  ^2
       % =    \int_\Omega |f|^{s/2} 
        = \norm{f}^{s}_{L^{s/2}} \leq \norm{f}_{L^s}^{sa} \norm{f}_{L^1}^{s(1-a)} 
        \leq \epsilon \norm{f}^{s}_{L^s} + C(K)
        = \epsilon \norm{|f|^{s/2}}_{L^2}^2 + C(K).
       \end{align*}
      %and noting that $\norm{f}^{s}_{L^s} = \norm{|f|^{s/2}}_{L^2}^2$, one has
      Therefore, in both cases, from \eqref{Poinc}, we arrive at
      \begin{equation*} 
       \norm{|f|^{s/2}}_{L^2}^2 \leq C \norm{\nabla |f|^{s/2}}^2_{L^2} 
       + C (K),
      % \\ &\leq C \norm{\nabla |f|^{s/2}}^2_{L^2} + \epsilon \norm{|f|^{s/2}}_{L^2}^2+ C(K),
       \end{equation*}
      hence using \eqref{interm} the proof is finished.
 \end{proof}     
%      \
%      
%        \begin{align*}
%        \norm{|f|^{s/2}}_{L^1}^2= 
%        \end{align*}
%       
%        by Poincare's inequality
%      \begin{align*}
%       \norm{|f|^{s/2}}_{L^2}^2 &\leq C \norm{\nabla |f|^{s/2}}^2_{L^2} 
%       + C \norm{|f|^{s/2}}^2_{L^1} 
%       \\ &\leq C \norm{\nabla |f|^{s/2}}^2_{L^2} + \epsilon \norm{|f|^{s/2}}_{L^2}^2+ C(K),
%       \end{align*}
%       or that
%       $$
%       \norm{|f|^{s/2}}_{L^2}^2 \leq C(K) + C \norm{\nabla |f|^{s/2}}^2_{L^2}.
%       $$
%   This allows us, using Sobolev embeddings, to obtain
%   \begin{align*}
%       \norm{|f|^{s/2}}_{L^6}^2 &\leq C \left( \norm{\nabla |f|^{s/2}}^2_{L^2} 
%       + \norm{|f|^{s/2}}^2_{L^2}\right) \\
%       &\leq C \norm{\nabla |f|^{s/2}}^2_{L^2} + C(K).
%   \end{align*}
%
%Therefore, concluding that
%   $$
%    \int \abs{f}\abs{g} \leq  \epsilon_1 \norm{\nabla g}_{L^2}^2
%     + \epsilon_2 \norm{\nabla |f|^{s/2}}_{L^2}^2 + C.
%$$

    We also make use of a comparison inequality between the truncation operators.
    \begin{lema} \label{Lema dif T G}
    Let $T_0^m$ and $G_0^m$ be the truncations defined in \eqref{T0m} and \eqref{G0m}. Then for every $s>1$ and $q \geq 1$, 
     it holds that
    $$
    n^{s-q} \, T_0^m(n)^q \leq s \, G_0^m(n),\quad \forall\, n\in \mathbb{R}. 
    $$

    \end{lema}
    \begin{proof}
        If $0\le n \leq m$, the equality becomes  $n^s= s\, G^m_0(n)$ which is true looking at  \eqref{T0m} and \eqref{G0m}, and if $n<0$, the equality reduces to  $0=0$. Let $n > m$ and define the auxiliary function,
     $$
     f(n) \coloneqq  \frac1s n^{s-q} \, T^m_0(n)^q  - G^m_0(n)
     = \frac{1}{s} n^{s-q} m^q - \frac{m}{s-1}n^{s-1} + \frac{m^s}{s(s-1)}.
     $$ 
     It suffices to prove that $f(n)\le 0 $ for all $n>m$. 
    Note that 
    \begin{align*}
        \lim_{n \to m^+} f(n) &= \frac{1}{s} m^s - \frac{m^s}{s-1} + \frac{m^s}{s(s-1)}  = 0.
    \end{align*}
 On the other hand, for any $n>m$ 
    \begin{align*}
        f'(n) &= \frac{s-q}{s} n^{s-q-1}m^q - mn^{s-2}
              = \frac{s-q}{s} n^{s-q-1}m^q - n^{s-q-1}  n^{q-1} m \\
              &\leq \frac{s-q}{s} n^{s-q-1}m^q - n^{s-q-1}m^q 
              = n^{s-q-1}m^q \left( \frac{s-q}{s} - 1 \right) \leq 0,
    \end{align*}
 because as $q \geq 1$ it holds $(s-q)/{s}  \leq 1$. In summary, $f(n)$ is a non-increasing negative function for all $n>m$.
  \end{proof}

\section{Existence of solutions for the time discrete scheme %\eqref{Sist Discretizado}
} \label{secao 3}

%With the preliminary results established and the discretization defined, we can now state an existence %result for the time discrete scheme \eqref{Sist Discretizado}. 
%The initial step requires slightly higher regularity due to the bootstrapping argument involved.

\begin{prop}
            Let $\Omega$ be as in Theorem \ref{resultado principal}. Let $(n^{i-1},  z^{i-1},u^{i-1}) \in  (L^2\cap L^s) \times L^\infty \times (H \cap L^5)$, with $n^{i-1} \geq 0, \ z^{i-1} \geq \alpha$ where $z^{i-1} = \sqrt{c^{i-1} + \alpha^2}$. Then for any $\alpha$ small enough, there exists $(n^{i},z^{i},u^{i},P^{i}) \in  H^2 \times H^2 \times (H^2\cap V) \times H^1$ solving the time discrete scheme \eqref{Sist Discretizado}.
\end{prop}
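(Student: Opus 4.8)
The plan is to solve the coupled discrete system \eqref{Sist Discretizado} by a Leray--Schauder fixed-point argument, decoupling the three equations and exploiting that at each time step the equations are, after linearization around the previous iterate, standard linear elliptic problems. Concretely, I would set up a map $\mathcal{F}$ on a suitable Banach space: given a trial pair $(\bar u, \bar n, \bar z)$ (with $\bar z \geq \alpha$, $\bar n \geq 0$ enforced by the structure of the truncations), first solve the Stokes-type problem for $u^i$ with the convection term $u^{i-1}\cdot\nabla u^i$ (which is \emph{linear} in $u^i$ since $u^{i-1}$ is data) and source $T_0^m(\bar n)\nabla\Phi$; since $T_0^m(\bar n)\in L^\infty$ and $u^{i-1}\in L^5\cap H$, elliptic regularity for the Stokes operator gives $u^i\in H^2\cap V$ with $P^i\in H^1$. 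Then, with this $u^i$ fixed, solve the linear convection--diffusion equation $\delta_t z^i + u^i\cdot\nabla z^i - \Delta z^i = F(\bar n,\bar z)$ where $F$ collects the bounded zeroth-order term $-\tfrac{s}{2}G^m_0(\bar n)(\bar z - \alpha^2/T_\alpha(\bar z))$ and the term $|\nabla \bar z|^2/T_\alpha(\bar z)$; here we need $\bar z$ in a space controlling $\nabla\bar z$ in $L^4$ so that $|\nabla\bar z|^2/T_\alpha(\bar z)\in L^2$, and then $H^2$-regularity of the Neumann problem (Definition \ref{def reg}) yields $z^i\in H^2$. Finally solve the linear equation for $n^i$: $\delta_t n^i + u^i\cdot\nabla n^i - \Delta n^i = -\nabla\cdot(T_0^m(\bar n)\,2T_\alpha(z^i)\nabla z^i)$, whose right-hand side is in $L^2$ (since $T_0^m(\bar n)\in L^\infty$, $T_\alpha(z^i)\in L^\infty$, $\nabla z^i\in H^1\hookrightarrow L^6$, and $D^2 z^i\in L^2$), giving $n^i\in H^2$.

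The key steps, in order, are: (1) define the function spaces and the solution map $\mathcal{F}:(\bar u,\bar n,\bar z)\mapsto(u^i,n^i,z^i)$, choosing the topology so that $\mathcal{F}$ maps into a space compactly embedded in the domain space (e.g.\ work in $L^5\times L^2\times W^{1,4}$ or similar, with $\mathcal{F}$ landing in $H^2\cap V \times H^2\times H^2$, and use Rellich); (2) verify $\mathcal{F}$ is well-defined via the three linear solvability results above, each a routine application of Lax--Milgram plus elliptic/$H^2$ regularity, with the nonnegativity $z^i\geq\alpha$ and $n^i\geq 0$ recovered by testing with the negative parts $(z^i-\alpha)^-$ and $(n^i)^-$ — here the precise form of the truncations $T_\alpha$, $T_0^m$, $G_0^m$ is designed exactly so these sign-preservation arguments close; (3) establish continuity and compactness of $\mathcal{F}$ using $L^p$-continuity of the truncation operators and the Aubin--Lions-type compact embeddings; (4) obtain a uniform a priori bound on the set $\{x : x = \sigma\mathcal{F}(x),\ \sigma\in[0,1]\}$, by testing each equation of the $\sigma$-scaled system with the natural test functions ($n^i$ with itself or with a power, $z^i$ with itself, $u^i$ with itself) and absorbing the coupling terms — this is where the smallness of $\alpha$ enters, to control the singular term $|\nabla z^i|^2/T_\alpha(z^i)$ and the factor $\alpha^2/T_\alpha(z^i)$; (5) conclude existence of a fixed point by Leray--Schauder, and recover the pressure $P^i\in H^1$ by de Rham's theorem.

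The main obstacle will be step (4): closing the a priori estimate for the fixed-point parameter uniformly in $\sigma$, because the $z$-equation contains the quadratically nonlinear gradient term $|\nabla z^i|^2/T_\alpha(z^i)$, which is not lower-order and cannot be absorbed by the diffusion in the $z$-equation alone. The resolution is to use the energy structure inherited from the change of variables $z=\sqrt{c+\alpha^2}$: testing the $z$-equation with an appropriate function (morally $z^i$ minus a multiple, reflecting that $\int|\nabla z|^2/z\,\Delta z$ is controlled via Lemma \ref{lema d2 z}) and combining with the $n$-equation tested against $(G_0^m)'(n^i)=T_0^m(n^i)(n^i)^{s-2}$ — which is precisely why $G_0^m$ was introduced, see \eqref{eq Gm Tm} — so that the chemotaxis cross-term $\langle T_0^m(n^i)2T_\alpha(z^i)\nabla z^i,\nabla((G_0^m)'(n^i))\rangle$ cancels against a corresponding term from the $z$-equation up to controllable remainders. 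A secondary technical point is that $G_0^m$ is only $C^1$ (not $C^2$) at $n=m$, so Lemma \ref{lema derivada discreta} does not apply directly to $f=G_0^m$; one handles the discrete time derivative $\langle\delta_t n^i,(G_0^m)'(n^i)\rangle$ by a convexity argument, using that $G_0^m$ is convex, giving $\langle\delta_t n^i,(G_0^m)'(n^i)\rangle\geq\delta_t\int_\Omega G_0^m(n^i)$. Once the coupled estimate yields a $\sigma$-independent bound in $H^2\cap V\times H^2\times H^2$, everything else is routine.
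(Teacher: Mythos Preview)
Your Leray--Schauder strategy matches the paper's: decouple via an auxiliary map (the paper takes the domain $(W^{1,4})^2\times V$ for $(\bar n,\bar z,\bar u)$), solve Stokes then $z$ then $n$ with the stated elliptic regularities, get compactness from $H^2\hookrightarrow W^{1,4}$, and close the $\lambda$-uniform bound through the chemotaxis/consumption cancellation combined with Lemma~\ref{lema d2 z}.

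Two points in your step~(4) need correcting. First, the right test for the $n$-equation is $n^{s-1}$, not $(G_0^m)'(n)$. The cancellation works because after testing by $n^{s-1}$ the chemotaxis term reads
\[
(s-1)\int T_0^m(n)\,2T_\alpha(z)\nabla z\cdot n^{s-2}\nabla n
=(s-1)\int 2T_\alpha(z)\nabla z\cdot\nabla G_0^m(n),
\]
using \eqref{eq Gm Tm} in the form $\nabla G_0^m(n)=T_0^m(n)n^{s-2}\nabla n$; this matches the term $-\tfrac{s}{2}\int T_\alpha(z)\nabla z\cdot\nabla G_0^m(n)$ coming from the $z$-side. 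If instead you test by $(G_0^m)'(n)$ you get $\int T_0^m(n)\,(G_0^m)''(n)\,2T_\alpha(z)\nabla z\cdot\nabla n$, which does not match, and worse the diffusion contribution $\int (G_0^m)''(n)|\nabla n|^2$ fails to be nonnegative on $\{n>m\}$ when $s<2$ since $(G_0^m)''(n)=m(s-2)n^{s-3}$ there. Second, the $z$-test is not ``$z$ minus a multiple'': one multiplies the $\lambda$-equation by $\lambda T_\alpha(z)/z$ and then tests by $-\Delta z$, so that $|\nabla z|^2/T_\alpha(z)$ turns into $\lambda|\nabla z|^2/z$ and combines with $\int|\Delta z|^2$ via Lemma~\ref{lema d2 z}. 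To bound the remaining terms $\lambda$-uniformly the paper first proves the \emph{two-sided} pointwise estimate $\alpha\le z/\lambda\le\|z^{i-1}\|_{L^\infty}$ (by testing $(z/\lambda-\|z^{i-1}\|_{L^\infty})_+$ and $(z/\lambda-\alpha)_-$), which yields the crucial control $\lambda T_\alpha(z)/z\le\|z^{i-1}\|_{L^\infty}/\alpha$; knowing only $z\ge\alpha$ is not enough. Finally, your remark about $G_0^m\in C^1$ and Lemma~\ref{lema derivada discreta} is not relevant here: the fixed-point problem is stationary at each $i$, so there is no discrete time derivative to handle---one simply estimates $\tfrac{1}{k}\int n^s$ against $\tfrac{\lambda}{k}\int n^{i-1}n^{s-1}$ by Young.
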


\begin{proof}
    The existence of solutions is proven by a fixed-point argument. 
    Given $(\overline{n},\overline{z},\overline{u}) \in (W^{1,4})^2 \times V$, we consider the (decoupled) auxiliary problem to define $(n,z,u,P)$ solving
    \begin{equation}
        \left\{\begin{aligned} 
& \dfrac{u}{k}
 +(u^{i-1}  \cdot \nabla) u 
 - \Delta u 
 + \nabla P
  =   T_0^m(\overline{n}) \nabla \Phi 
 + \frac{u^{i-1}}{k}, 
 \quad \nabla\cdot u=0,
 \\
&\dfrac{z}{k} 
- \Delta z 
+ \dfrac{s}{2}G^m_0(\overline{n}) z 
= \alpha^2 \dfrac{s}{2}  \dfrac{G^m_0(\overline{n})}{ T_\alpha(\overline{z})} 
+ \dfrac{\abs{\nabla \overline{z}}^2}{T_\alpha(\overline{z})} 
+ \dfrac{z^{i-1}}{k} 
-\overline{u} \cdot \nabla \overline{z}, \\
 &\dfrac{n}{k} +
\overline{u}   \cdot \nabla n 
- \Delta n = 
-2\, \nabla \cdot(T_0^m(\overline{n})  T_\alpha(\overline{z}) \nabla z) 
+\dfrac{n^{i-1}}{k}, \\
%&\quad \nabla \cdot u  =0.   \\ %%TIREI POR CONTA DOS ESPAÇOS
 &  \left.\partial_\eta n \right|_{\Gamma}=\left.\partial_\eta z \right|_{\Gamma}= \left. u \right|_{\Gamma}= 0.
\end{aligned}\right.
    \end{equation}
We first ensure, through the Lax-Milgram theorem, the existence of $u$ and $z$, and then the existence of  $n$ (depending on $z$), in such a way that a compact map 
$$S: (\overline{n},\overline{z},\overline{u}) \to (n,z,u)$$ is defined from $(W^{1,4})^2\times V$ to itself. Finally, by Leray-Schauder fixed-point theorem \cite{Gilbarg2001}, we will obtain the desired solution of the discrete scheme \eqref{Sist Discretizado}. 

\

{\sl Step 1. Operator $S$ is well-defined}
 
Given $\overline{n}\in W^{1,4}$, the existence and uniqueness of the pair $(u,P$) follow a standard argument, by first applying Lax-Milgram theorem to get only $u\in V$, and after via De Rham's Lemma and the regularity of the Stokes problem, in a bootstrapping argument, getting that there exists a unique $(u,P) \in H^2  \times (H^1 \cap L_0^2)$, where $ L_0^2 = \{ f \in L^2 : \int_\Omega f =0\}$.
 
The existence and uniqueness of $z\in H^1$ follow from the Lax-Milgram theorem,  obtaining that in fact $z \in H^2$ via regularity of the Poisson-Neumann problem \eqref{poisson neuman}. From that, looking at $n$ equation and applying Lax-Milgram theorem, we again obtain, through a bootstrapping argument, necessary due to the presence of the convective term $\bar{u}   \cdot \nabla n$, that there exists a unique $n \in H^2$. 

The bounds obtained  throughout the bootstrapping argument are sufficient to conclude that $S$ is a well-defined compact mapping, defined by the composition of a sequentially continuous map from $(W^{1,4})^2\times V$ into $(H^2)^2\times (H^2\cap V)$ and the  compact embedding from $(H^2)^2\times (H^2\cap V)$ into  $(W^{1,4})^2\times V$.

%%%%%%%%%%%%%%%%%%%%%%%%%%%%%%%%%%%%%%%%%%%%%%%%%%%%%%%%%%%%%%

{\sl Step 2. A priori estimates of possible fixed-points}

To apply the fixed point theorem, we now need $\lambda$-independent estimates for all possible  triplets $(n,z,u)$ in $(W^{1,4})^2\times V$ satisfying $(n,z,u)= \lambda \, S(n,z,u)$ for some $\lambda \in [0,1]$. 

The case  $\lambda = 0$ is trivially satisfied because $(n,z,u) \equiv (0,0,0)$. 
Let $\lambda \in (0,1]$, therefore if $(n,z,u) = \lambda \, S(n,z,u)$, then $(n,z,u)$ satisfies the equations 

\begin{equation}  
\tag{$1_\lambda$} \label{1lamb} \ 
\dfrac{u}{k} 
+ u^{i-1} \cdot \nabla u 
- \Delta u 
+ \nabla P
 = 
 \lambda \,T_0^m(n) \nabla \Phi + \lambda \frac{u^{i-1}}{k},
\end{equation}
\begin{equation}  
\tag{$2_\lambda$} \label{2lamb}
   \dfrac{n}{k} 
    + u \cdot \nabla n - \Delta n = 
    -\nabla \cdot(T_0^m(n)2T_\alpha(z)   \nabla z) 
    +\lambda\frac{ n^{i-1}}{k},
\end{equation} 
 \begin{equation} 
\tag{$3_\lambda$} \label{3lamb}
     \dfrac{z}{k \lambda} 
+u \cdot \nabla z 
- \Delta \frac{z}{\lambda}
+ \frac{s}{2 \lambda}G_0^m(n) z 
= \alpha^2 \frac{s}2 \frac{G_0^m(n)}{T_\alpha(z)} 
+ \frac{\abs{\nabla z}^2}{T_\alpha(z)} 
+ \frac{z^{i-1}}{k}  .
 \end{equation}  
 
 Firstly, as $u \in V$, we can test \eqref{1lamb} by $u$, obtaining
\begin{align*}
    \dfrac{1}{k} \int \abs{u}^2  
    +\int \abs{\nabla u}^2   
    &= 
     \lambda \int T_0^m(n) \nabla \Phi \cdot u + \int \lambda \frac{u^{i-1}}{k} \cdot u  \\
    &\leq \delta \int \abs{u}^2 
    + C_\delta \left( \frac{1}{k^2} \norm{u^{i-1}}_{L^2}^2 
    + m^2 \norm{\nabla \Phi}_{L^\infty}^2 \right).
\end{align*}
By taking $\delta<1/k$ there follows that $u$ is bounded in $V$  (independently of $\lambda$).

We now deal separately with $n$ and $z$. The proof is analogous to that presented in  \cite[Theorem 3.1]{Guillen-Gonzalez2023-gy}, and here we only point out the main differences.

The first step is to show that 
$n$ is a nonnegative function. For that, $n$ equation \eqref{2lamb} is tested by  the negative part of $n$, defined as $n_- \coloneqq \min\{ 0,n\}$. Noting that for the convective term 
 $$
 \int (u \cdot \nabla n) n_- =   \frac{1}{2}
\int u \cdot \nabla (n_-^2) = 0 ,
 $$
 then it can  be shown that $n_- = 0$ a.e. in $\Omega$, and therefore $n \geq 0$ a.e. in $\Omega$. 
 
 %{\sl Step 2. Pointwise bounds of $z$}
 
 Now, we will  show the following pointwise bounds, not only on $z$ but also on $z/\lambda$:
 %For it, we use the structure of the $z$ equation.
% This is something we lack due to the nature of $z$ equation, where we may not get the necessary regularity by testing $z$ equation by $z$ itself, rather than that, what is done here is using the structure of $z$'s energy equation.
%\begin{lema} ($z/\lambda$ pointwise bounds) \label{lema est pontual z}
%It holds
\begin{equation} \label{lema est pontual z}
\alpha \leq \frac{z}{\lambda} \leq \norm{z^{i-1}}_{L^\infty} \quad \hbox{a.e. in $\Omega$}.
\end{equation}
%\end{lema}
%\begin{proof}
 To prove \eqref{lema est pontual z},  the calculations are similar to the ones presented in \cite[Theorem 3.1]{Guillen-Gonzalez2023-gy}. The only difference is given by the convective term $u \cdot \nabla z$. However,  rewriting \eqref{3lamb} in terms of $\tilde{z} \coloneqq z/\lambda - \norm{z^{i-1}}_{L^\infty}$ and testing by $\tilde{z}_+$ where $\tilde{z}_+ \coloneqq  \max\left\{0,\tilde{z}\right\}$, the convective term becomes 
$$
\lambda \int (u \cdot \nabla z ) \tilde{z}_+ 
 = \lambda \int (u \cdot \nabla \tilde{z} ) \tilde{z}_+ 
= - \frac{\lambda}{2} \int u \cdot \nabla (\tilde{z}_+^2) = 0.
$$
 Similarly, when dealing with the lower bound, the equation is rewritten in terms of $\tilde{z} \coloneqq z/\lambda - \alpha$ and testing by $ \tilde{z}_{-}$, the convective term also vanishes.
%\end{proof}

Note that,  with pointwise estimates \eqref{lema est pontual z}  for $z/\lambda$ we also gain a  pointwise estimate for $z$ in $L^\infty$ (independent of $\lambda$).

To estimate $z$ in $ H^2$ we use  the energy structure of the $z$ equation. We first multiply \eqref{3lamb} by ${\lambda T_\alpha(z)}/{z}$, recalling that $z>0$, and arrive at
 \begin{align*}
     &\dfrac{T_\alpha(z)}{k } 
     -  T_\alpha(z) \dfrac{\Delta z}{z} 
     - \lambda \frac{\abs{\nabla z}^2}{z}  \\
     &= -\dfrac{s}{2 }G_0^m(n) T_\alpha(z) 
     + \lambda \dfrac{s\alpha^2}{ 2}G^m_0(n) \dfrac{1}{z} 
     + \lambda u \cdot \nabla z \dfrac{T_\alpha(z)}{z}
     + \lambda \dfrac{z^{i-1}}{k} \dfrac{ T_\alpha(z)}{z} .
 \end{align*}
By testing by $- \Delta z$, and integrating by parts, we have
 \begin{align} \label{teste energia existencia}
     & -\dfrac{1}{k } \int T_\alpha(z) \Delta z 
     +  \lambda  \int \frac{\abs{\nabla z}^2}{z} \Delta z
     + \int T_\alpha(z) \dfrac{\abs{\Delta z}^2}{z} 
     \nonumber \\ 
     %&= \int \dfrac{s}{2 }G_0^m(n) T_\alpha(z) \Delta z      -\int  \dfrac{s G_0^m(n)\lambda \alpha^2}{ 2 z} \Delta z      - \int \lambda u \cdot \nabla (z)\dfrac{T_\alpha(z)}{z} \Delta z      - \int \dfrac{z^{i-1}}{k} dfrac{\lambda T_\alpha(z)}{z} \Delta z  \nonumber \\
     &= -\dfrac{s}{2 }\int T_\alpha(z) \nabla G^m_0(n) \cdot  \nabla z 
     -\frac{s}{2}\int  G^m_0(n) (T_\alpha(z))' \abs{\nabla z}^2
     - \lambda \frac{ s \alpha^2}{2}\int  \dfrac{ G^m_0(n)}{ z^2} \abs{\nabla z}^2 
     \nonumber \\
     &+ \lambda \frac{ s \alpha^2}{2} \int \frac{1}{z} \nabla G^m_0(n) \cdot \nabla z
      - \lambda \int  u \cdot \nabla z\dfrac{T_\alpha(z)}{z} \Delta z 
     - \lambda\int \dfrac{z^{i-1}}{k} \dfrac{ T_\alpha(z)}{z} \Delta z.
     \end{align}
For the LHS of \eqref{teste energia existencia}, using the fact that for $T_\alpha(z)/ z \geq 1$ and integrating by parts
\begin{equation}
\begin{aligned}
    &-\dfrac{1}{k } \int T_\alpha(z) \Delta z 
    +  \lambda  \int \frac{\abs{\nabla z}^2}{z}\Delta z 
    + \int T_\alpha(z) \dfrac{\abs{\Delta z}^2}{z} \\ &
    \ge \dfrac{1}{k } \int T'_\alpha(z) \abs{\nabla z}^2  
    +  \lambda  \int \frac{\abs{\nabla z}^2}{z}\Delta z 
    + \int  \abs{\Delta z}^2 \\
    &\geq   \lambda  \int \frac{\abs{\nabla z}^2}{z}\Delta z 
    + \lambda \int \abs{\Delta z}^2 
    + (1-\lambda)\int \abs{\Delta z}^2 .
\end{aligned}
\end{equation}
Then by Lemma \ref{lema d2 z},
\begin{align*}
    &-\dfrac{1}{k } \int T_\alpha(z) \Delta z 
    +  \lambda  \int \frac{\abs{\nabla z}^2}{z}\Delta z 
    + \int T_\alpha(z) \dfrac{\abs{\Delta z}^2}{z} \\     
    & \geq C_3 \int \abs{\Delta z}^2
    - C_2 \lambda \int \abs{\nabla z}^2 
    + C_1 \lambda  \int \frac{\abs{\nabla z }^4}{z^2} .
\end{align*} 

Now, we deal with the RHS of \eqref{teste energia existencia}. 
Note that using the already obtained pointwise estimates \eqref{lema est pontual z} gives us
\begin{equation} \label{estimativa Tz/z}
     \lambda \frac{T_\alpha(z)}{z}  
 = \frac{T_\alpha(z)}{z/\lambda} \leq \frac{T_\alpha(z)}{\alpha} 
 \leq \frac{\norm{z^{i-1}}_{L^\infty}}{\alpha}.
\end{equation}
Therefore,  by first applying Hölder inequality and using \eqref{estimativa Tz/z} yields
\begin{equation}
     \lambda \int  u \cdot \nabla z \dfrac{T_\alpha(z)}{z} \Delta z 
\leq C \norm{u}_{H^1} \norm{\nabla z}_{L^4} \norm{\Delta z}_{L^2}.
\end{equation}
Now by applying Young's inequality twice, and multiplying and dividing the gradient term by $z^2/\lambda^2$, we have
\begin{align*}
\norm{u}_{H^1} \norm{\nabla z}_{L^4} \norm{\Delta z}_{L^2}  \leq C(\epsilon,\norm{z^{i-1}}_{L^\infty}) \norm{u}_{H^1}^4 
+ \lambda^2 \norm{z^{i-1}}_{L^\infty}^2 \epsilon \int \frac{\abs{\nabla z}^4}{z^2}  
+ \epsilon \norm{\Delta z}_{L^2}^2,
 \end{align*}
hence
 \begin{align*}
     \lambda \int  u \cdot \nabla z \dfrac{T_\alpha(z)}{z} \Delta z  
     \leq C(\epsilon,\norm{z^{i-1}}_{L^\infty},\norm{u}_{H^1}) 
     + \lambda \epsilon \int \frac{\abs{\nabla z}^4}{z^2} +  \epsilon \norm{\Delta z}_{L^2}^2.
 \end{align*}
 It also holds true that
 \begin{align*}
     \lambda\int \dfrac{z^{i-1}}{k} \dfrac{ T_\alpha(z)}{z} \Delta z  
     \leq C(\epsilon,\norm{z^{i-1}}_{L^\infty},k) 
     + \epsilon \norm{\Delta z}_{L^2}^2.
 \end{align*}
Moreover, using \eqref{eq Gm Tm} one has $\nabla G^m_0 (n)=T_0^m(n) n^{s-2} \nabla n =(2/s) T_0^m(n) n^{s/2-1} \nabla (n^{s/2})$, hence 
%the definition of $\nabla G^m_0 (n)$ 
 \begin{align*}
     \frac{\lambda s\alpha^2}{2} \int \frac{1}{z} \nabla G^m_0( n) \cdot \nabla z 
     &= \lambda \alpha^2\int \frac{n^{s/2-1}}{z}T_0^m(n) \nabla z \cdot \nabla n^{s/2} 
     \\ &\leq  \frac{\lambda \alpha^3}{2} \int \frac{n^{s-2}}{z^2} T_0^m(n)^2\abs{\nabla z}^2 
     +  \frac{\lambda \alpha}{2}  \int \abs{\nabla n^{s/2}}^2.
 \end{align*}
By Lemma \ref{Lema dif T G}, one has $n^{s-2} T_0^m(n)^2\le s\, G_0^m(n)$, so
 \begin{align} \label{estimativa rhs}
     \frac{\lambda s \alpha^2}{2} \int \frac{1}{z} \nabla G^m_0( n) \cdot \nabla z 
     &\leq  \frac{\lambda s \alpha^3}{2} \int \frac{G^m_0(n)}{z^2}\abs{\nabla z}^2 
     +  \frac{\lambda \alpha}{2} \int  \abs{\nabla n^{s/2}}^2.
 \end{align}
  Then by setting $\alpha < 1$ the first term will be absorbed in \eqref{teste energia existencia}. We also note that $\nabla n^{s/2}$ might be singular for $s<2$. For an adaptation of this argument on the case $s<2$, the use of a translation on $n$ is necessary. This will be made more accurate in Lemma \ref{en s<2} below.

Altogether, we arrive at the inequality
\begin{align} \label{energia z existencia}
    &C_3 \int \abs{\Delta z}^2 
    - C_2 \lambda \int \abs{\nabla z}^2 
    +( C_1- \epsilon) \lambda \int \frac{\abs{\nabla z }^4}{z^2} 
    + \frac{s}{2}\int  G_0^m(n) (T_\alpha(z))' \abs{\nabla z}^2 \nonumber \\ 
    &\leq \frac{\lambda \alpha}{2} \int \abs{\nabla n^{s/2}}^2
   -\dfrac{s}{2 }\int \nabla G_0^m(n) \cdot T_\alpha(z) \nabla z 
   +C(\epsilon,\norm{u}_{H^1},\norm{z^{i-1}}_{L^\infty})  .
\end{align}

On the other hand, we multiply \eqref{2lamb} by $n^{s-1}$ and use \eqref{eq Gm Tm} to obtain
\begin{align*}
    \int \frac{n^s}{k}  
    +  \dfrac{4(s-1)}{s^2}\int \abs{\nabla n^{s/2}}^2 
    &= (s-1)\int T^m_0(n) n^{s-2}2 T_\alpha(z) \nabla z \cdot \nabla n
    + \frac{\lambda}{k}\int n^{i-1} n^{s-1} \\
%    &\le (s-1)\int 2 T_\alpha(z) \nabla z \cdot \nabla G^m_0(n) 
%    + \frac{1}{k}\int n^{i-1} n^{s-1} \\
    &\leq (s-1)\int 2 T_\alpha(z) \nabla z \cdot \nabla G^m_0(n) 
    + C(k,\norm{n^{i-1}}_{L^s}) 
    + \frac{1}{2k} \int n^s,
\end{align*}
and then
\begin{align} \label{en existencia n}
    \frac{s}{8(s-1)} \int \frac{n^s}{k} 
    +  \dfrac{1}{2s}\int \abs{\nabla n^{s/2}}^2 
    \leq \frac{s}{2} \int T_\alpha(z) \nabla z \cdot \nabla G^m_0(n) 
    + C(k,\norm{n^{i-1}}_{L^s}).
\end{align}
Summing up \eqref{energia z existencia} and \eqref{en existencia n}, and taking $\epsilon$ small enough, we get 
\begin{align*}
     &\frac{s}{8(s-1)} \int \frac{n^s}{k} 
     +  \dfrac{1}{s}\int \abs{\nabla n^{s/2}}^2
     + C_3 \int \abs{\Delta z}^2 
     + C_4 \lambda  \int \frac{\abs{\nabla z }^4}{z^2} 
     + \frac{s}{2}\int  G_0^m(n) (T_\alpha(z))' \abs{\nabla z}^2 
     \\ &\leq  \frac{\alpha}{2} \int \abs{\nabla n^{s/2}}^2 
     + C_2 \lambda \int \abs{\nabla z}^2 + C.
\end{align*}
Thus, if we set  $\alpha < 2/s$, we may absorb the  first term of the RHS. As for the second one, just note that
$$
 C_2 \lambda \int \abs{\nabla z}^2 \leq \lambda \epsilon \int \frac{\abs{\nabla z }^4}{z^2} + C(\epsilon,\norm{z^{i-1}}_{L^\infty}).
$$
Therefore $\Delta z $ is bounded in $L^2$ independently of $\lambda$, hence  $z$ is bounded in $H^2$ independently of $\lambda$ through the Poisson-Neumann regularity.

Now to check that $n$ is bounded in $ H^2$ (independently of $\lambda$), we start by testing \eqref{2lamb} by $n$, getting that $n $ is bounded independently of $\lambda$ in $ H^1$. Indeed,
\begin{align*}
        \dfrac{1}{k}\norm{n}^2_{L^2} + \norm{\nabla n}^2_{L^2} &= \int T_0^m(n) \nabla(z^2) \cdot \nabla n +  \dfrac{\lambda}{k} \int n^{i-1} n \\
   &\leq \epsilon \left( \dfrac{1}{k}\norm{n}^2_{L^2} + \norm{\nabla n}^2_{L^2}\right) + C(\epsilon,m,k,\norm{z}_{H^1},\norm{z^{i-1}}_{L^\infty},\norm{n^{i-1}}_{L^2}).
\end{align*}

On the other hand, since $z \in H^2$ we can use the identity 
\begin{align}
   \nabla \cdot (T^m_0(n) T_\alpha(z) \nabla z) =  \nabla n \cdot \nabla z (T^m_0)'(n) T_\alpha(z) + T^m_0(n) (T_\alpha(z))' \abs{\nabla z}^2 + T^m_0(n)T_\alpha(z) \Delta z .
\end{align}
Then $ \nabla \cdot (T^m_0(n) T_\alpha(z) \nabla z)$ is bounded  in  $L^2$ 
 and we may then test \eqref{2lamb} by $-\Delta n$. We observe that by interpolation 
\begin{align*}
    \int u \cdot \nabla n \, \Delta n &= -\int  (\nabla u  \, \nabla n) \cdot \nabla n \leq \int \abs{\nabla n}^2 \abs{\nabla u} \\
    & \leq C(\epsilon,  \norm{u}_{H^1}, \norm{n}_{H^1} ) + \epsilon \norm{\Delta n }_{L^2}^2 ,
\end{align*}
and therefore 
\begin{align*}
  &  \dfrac{1}{k} \int \abs{\nabla n}^2 + \int \abs{\Delta n}^2 = -2 \int \nabla \cdot (T^m_0(n) T_\alpha(z) \nabla z) \Delta n - \dfrac{\lambda}{k} \int n^{i-1} \Delta n +  \int u
 \cdot \nabla n \Delta n  \\
 &\leq C_1 ( \epsilon,m,k,\norm{z}_{H^2},\norm{z^{i-1}}_{L^\infty}, \norm{n^{i-1}}_{L^2}, \norm{u}_{H^1})  + 3\epsilon \int \abs{\Delta n}^2 .  \end{align*}
 %By taking $\epsilon$ small enough, 
 %\begin{align*}
  %     \dfrac{1}{2}\int \abs{\Delta n}^2 \leq  C+ %C_\delta \norm{u}_{H^1}^2 + \delta \norm{\nabla %n}_{L^4}^2.
 %\end{align*}
% %
% Using $L^p$ interpolation, Sobolev embeddings and Young's inequality, one has 
%
% \begin{align*}
%  \norm{\nabla n}_{L^4}^2 \leq \epsilon \norm{ n}_{H^2}^2 + C(\epsilon) \norm{n}_{H^1}^{2}.
%  \end{align*}
% %
By taking $\epsilon$ small enough it follows that $n$ is bounded in $H^2$ (and in particular in $W^{1,4}$) independently of $\lambda$.

\

{\sl Step 3. Conclusion.}

 In summary, the fixed-point gives a triplet $(u^i,n^i,z^i) \in V \times (H^2)^2$, with an associated pressure $P^i \in H^1$, solution of the discrete scheme \eqref{Sist Discretizado}. 
 
 \end{proof}

%%%%%%%%%%%%%%%%%%%%%%%%%%%%%%%%%%%%%%%%%%%%%%%%%%%%%%%%%%%%%%%%%%%%%%%%%%%%%%%%

\section{First uniform \texorpdfstring{$(m,k)$}{(m,k)}-estimates and energy inequalities} \label{secao 4}

By using the pointwise estimates made in previous section, estimate \eqref{lema est pontual z} (for $\lambda = 1$) readily implies that 
\begin{equation} \label{lema est pontual z-bis}
n^i\ge 0,\quad 
\alpha \leq z^i \leq \norm{z^{i-1}}_{L^\infty} 
\quad \text{a.e. in } \Omega.
\end{equation}
%as this is the estimate obtained , namely the fixed point. 
This guarantees the lower bound for $n^i$ and $z^i$ and also provides a $L^\infty$ bound for $z^i$. 
Moreover, from now on, the lower truncations given by \eqref{Talpha},  \eqref{T0m}, and \eqref{G0m} in the time discrete scheme \eqref{Sist Discretizado} may be simplified by making the following changes:
$$T_\alpha(z^i) = z^i,\quad T^m_0(n^i) = T^m(n^i) \quad \hbox{and} \quad G^m_0(n^i) = G^m(n^i),$$
where
$$     T^m(n)=\left\{ \ \begin{array}{cl}
n & \text { if } n \leq m, \\
m & \text { if } n \geq m ,
\end{array}\right. 
\quad 
G^m (n) = \left\{ \ \begin{array}{ll}
 \frac{1}{s}n^{s} &\text{ if } n\leq m, \\
 \frac{m}{s-1} n^{s-1} - \frac{m^s}{s(s-1)} & \text{ if } n\geq  m.
 \end{array}\right. 
 $$
 Specifically, time scheme  \eqref{Sist Discretizado} reduces to find $( n^i,z^i,u^i,P^i)\in (H^2)^2 \times(H^2\times V) \times H^1$ satisfying pointwise bounds \eqref{lema est pontual z-bis} and problem: 
 \begin{equation}  
  \label{Sist Discretizado-bis} 
 %\tag{$I_k^m$} 
 \left\{ \ \begin{aligned}  
&\delta_t n^i+u^i \cdot \nabla n^i  
-\Delta n^i
+\nabla \cdot(T^m(n^i)  2\, z^i \nabla z^i)=0, \\
&\delta_t z^i+u^{i} \cdot \nabla z^i 
-\Delta z^i
- \frac{\abs{\nabla z^i}^2}{z^i}
=-\dfrac{s}{2} G^m(n^i)\left( z^i - \dfrac{\alpha^2} {z^i}  \right) 
,  \\
&\delta_t  u^i
+u^{i-1} \cdot \nabla u^i 
-\Delta u^i
+\nabla P^i
 =
T^m(n^{i}) \nabla \Phi,  \\
 &\nabla \cdot u^i =0, 
\\ 
  &\left.\partial_\eta n^i \right|_{\Gamma}=\left.\partial_\eta z^i \right|_{\Gamma}
  = \left. u^i \right|_{\Gamma}= 0.
\end{aligned}\right.
\end{equation}

The next step is to obtain $(m,k)$-independent estimates.
The first result is obtained directly through testing the equations of the discrete scheme \eqref{Sist Discretizado-bis}, and reads as follows.

\begin{lema} \label{lema uniform estimat}
    Let $(n^i,z^i,u^i)$ be any solution of \eqref{Sist Discretizado-bis}. Then the following estimates hold
\begin{itemize}
    \item[i)] $\int n^i d x=\int n^0 d x$,\quad  for all $i \in \mathbb{N}$;
    \item[ii)] $\left\|z^i\right\|_{L^2}^2+\sum_{j=1}^i\left\|z^j-z^{j-1}\right\|_{L^2}^2 \leq\left\|z^0\right\|_{L^2}^2$ \quad for all $i \in \mathbb{N}$;
    \item[iii)] $k \sum_{j=1}^i\left\|\nabla z^j\right\|_{L^2}^2 \leq \frac{1}{4 \alpha^2}\left\|c^0+\alpha^2\right\|_{L^2}^2$ \quad for all $i \in \mathbb{N}$.
\end{itemize}
\end{lema}

\begin{proof}
All items will follow exactly from  \cite[Lemma 3.2]{Guillen-Gonzalez2023-gy}.
The first item is obtained by integrating $n^i$ equation, in which the convective term vanishes. The second and third items are obtained by testing $z^i$ equation, by  $z^i$ and $k(z^i)^3$ respectively, 
and, in both cases, the convective term also vanishes. In the latter case, it holds the intermediate inequality 
$k \sum_{j=1}^i\left\|\nabla (z^j)^2\right\|_{L^2}^2 \leq \left\| (z^0)^2 \right\|_{L^2}^2$, hence one arrives at  the third item using that $z^i\ge \alpha$.
\end{proof}

We now make use of the structure of the equations, and the cancellation effects between  chemotactic attraction  and  consumption terms, which is formally achieved by testing $n^i$ equation by $(n^i)^{s-1}$ and $z^i$ equation by $-\Delta z^i$ and balancing out the results. On the other hand,  we  use Lemma \ref{lema GNS produto} to bound the source term of the $u^i$ system. 

When the diffusion term $-\Delta n^i$ is tested by $(n^i)^{s-1}$, a term of the form $\nabla (n^i)^{s/2}$ appears, which may be singular in the case $s<2$. For this reason, the energy inequality is obtained separately for $s \in (1,2)$ and $s \geq 2$, avoiding this singular term $\nabla (n^i)^{s/2}$ in the case $s \in (1,2)$. 

\begin{lema} \label{en s<2}(Energy inequality for $s \in (1,2)$).
     Let $(n^i,z^i,u^i)$ be a solution of \eqref{Sist Discretizado-bis} with $s \in (1,2)$. Then, if  $\alpha$ is small enough,  there exist constants $C_1,C_2 ,C_3,C_4>0$ such that, 
      \begin{align} 
   &\frac{1}{2}  \delta_t \left[ \frac{1}{2(s-1)}\norm{n^i}_{L^s}^s + \norm{\nabla z^i}^2_{L^2} + C_1 \norm{u^i}_{L^2}^2 \right] 
    + \dfrac{1}{2k} \left[ \norm{\nabla z^i - \nabla z^{i-1}}_{L^2}^2 + C_1 \norm{u^i - u ^{i-1}}_{L^2}^2 \right] \nonumber \\ 
    & +  C_2 \norm{\nabla u^i}_{L^2}^2 +C_3 \left( \norm{D^2 z^i}_{L^2}^2 
    + \int \dfrac{\abs{\nabla z^i}^4}{(z^i)^2} \right)
    + \dfrac{s}{4} \int  G^m(n^i) \abs{\nabla z^i}^2 \leq   C_4.
\end{align}
\end{lema}

\begin{proof}
    We start by testing $z^i$ equation by $-\Delta z^i$, obtaining
\begin{align*}
   & \int \frac{z^{i-1}-z^i}{k}\Delta z^i 
   -\int  u^{i} \cdot \nabla z^i \Delta z^i 
   + \norm{\Delta z^i}_{L^2}^2 
   + \int \frac{\abs{\nabla z^i}^2}{z^i} \Delta z^i \\ 
   &=
    \dfrac{s}{2}\int \left( z^i - \dfrac{\alpha^2}{z^i}\right)G^m(n^i) \Delta z^i  
  % - \dfrac{s\alpha^2}{2}\int \ \frac{G^m\left(n^i\right)}{z^i}\Delta z^i 
  \\
 & = -\frac{s}{2} \int  \left( z^i - \dfrac{\alpha^2}{z^i}\right) \nabla G^m(n^i) \cdot \nabla z^i
   - \dfrac{s}{2}\int \left(1+ \dfrac{\alpha^2}{(z^i)^2} \right) G^m(n^i) \abs{\nabla z^i}^2.
  % - \dfrac{s\alpha^2}{2} \int \dfrac{G^m(n^i)}{ (z^i)^2} \abs{\nabla z^i}^2
  % \\    & + \dfrac{s \alpha^2}{2} \int \dfrac{1}{ z^i} \nabla G^m(n^i) \cdot \nabla z^i .
\end{align*}
Then, making use of Lemma \ref{lema d2 z} to bound the diffusive terms, bounding the convective term as
\begin{align*}
    \int  u^{i} \cdot \nabla z^i \Delta z^i 
    &= - \int  (\nabla u^i \, \nabla z^i) \cdot \nabla z^i  
    \leq \int \abs{\nabla z^i}^2 \abs{\nabla u^i} 
    \\&\leq \epsilon \int \frac{\abs{\nabla z^i}^4}{(z^i)^2} 
    + C_ \epsilon \int \abs{\nabla u^i}^2,
\end{align*}
one has, taking $\epsilon$ small enough and using \eqref{eq Gm Tm},
\begin{align}  \label{en z}
    &\delta_t \frac{1}{2}  \norm{\nabla z^i}^2_{L^2}
    + \dfrac{1}{2k} \norm{\nabla z^i 
    - \nabla z^{i-1}}_{L^2}^2 
    +  \frac12 C_1 \left( \norm{D^2 z^i}_{L^2}^2   + \int \dfrac{\abs{\nabla z^i}^4}{(z^i)^2} \right)
   %  \\    &
     + \dfrac{s}{2} \int   G^m(n^i) \abs{\nabla z^i}^2 
     \nonumber
    %+ \dfrac{s \alpha^2}{2} \int \dfrac{G^m(n^i)}{ (z^i)^2} \abs{\nabla z^i}^2
    \\  
  &\leq  
  - \frac{s}{2} \int  z^i \left( 1 - \dfrac{\alpha^2}{(z^i)^2}\right) T^m(n^i) (n^i)^{s-2} \nabla n^i  \cdot \nabla z^i 
  %+  \int  u^{i} \cdot \nabla z^i \Delta z^i
    % + \dfrac{s \alpha^2}{2} \int \dfrac{1}{ z^i} \nabla G^m(n^i) \cdot \nabla z^i
     +C_2\int \abs{\nabla z^i}^2
     + C_3 \int \abs{\nabla u^i}^2.
     \end{align}
%and
%\begin{align*}
%    C_2\int \abs{\nabla z^i}^2 &= C_2\int \frac{\abs{\nabla z^i}^2}{z^i}z^i %\\    &
%    \leq \epsilon \int \frac{\abs{\nabla z^i}^4}{(z^i)^2} + C_2 (\norm{z^0}_{L^\infty}).
%\end{align*}
% Therefore, we arrive at
%\begin{align} \label{en z}
%    &\delta_t\frac{1}{2}  \norm{\nabla z^i}^2_{L^2}
%    + \dfrac{1}{2k} \norm{\nabla z^i 
%    - \nabla z^{i-1}}_{L^2}^2 
%    +  C_4 \left( \norm{D^2 z^i}_{L^2}^2  + \int \dfrac{\abs{\nabla z^i}^4}{(z^i)^2} \right) 
%    + \dfrac{s}{2} \int  G^m(n^i) \abs{\nabla z^i}^2 \nonumber \\  
%      &+ \dfrac{s \alpha^2}{2} \int \dfrac{G^m(n^i)}{ (z^i)^2} \abs{\nabla z^i}^2
%  \leq C_3 \int \abs{\nabla u^i}^2  - \frac{s}{4} \int \nabla G^m(n^i) \cdot \nabla (z^i)^2 
% \nonumber \\ & + \dfrac{s \alpha^2}{2} \int \dfrac{1}{ z^i} \nabla G^m(n^i) \cdot \nabla z^i +  C_2.
%     \end{align}
     
 Due to the presence of the term  $\int \abs{\nabla u^i}^2$ in the RHS of \eqref{en z}, we look at the usual $u^i$ energy equation, testing the $u^i$ system by $u^i$ (accounting that $\int (u^{i-1} \cdot \nabla) u^i  \cdot u^i = 0$),
\begin{align}\label{en u}
& \delta_t \frac{1}{2}\norm{u^i}^2_{L^2} + \frac{1}{2k} \norm{u^i - u ^{i-1}}^2_{L^2} + \norm{\nabla u^i}^2_{L^2}   
= \int T^m(n^{i}) \, u^i \cdot \nabla \Phi
\le C_4 \int n^{i} \abs{u^i}.
%= \int \left(T^m(n^{i}) - \oint T^m(n^{i}) \right)u^i \cdot \nabla \Phi
% \\
% &
% \le \norm{\nabla \Phi}_{L^\infty} \norm{\left(T^m(n^{i}) - \oint T^m(n^{i}) \right)}_{L^2} \norm{u^i}_{L^2}
% \le C \norm{\nabla(T^m(n^{i})) }_{L^2} \norm{\nabla u^i}_{L^2}
\end{align}

Finally, we can then add up $2C_3$ times $\eqref{en u}$ and $\eqref{en z}$, %with $K>C_3$ 
and get

\begin{align} \label{en u2}
    &\delta_t\frac{1}{2}  \left[ \norm{\nabla z^i}^2_{L^2} +2C_3 \norm{u^i}_{L^2}^2 \right] 
    + \dfrac{1}{2k} \left[ \norm{\nabla z^i - \nabla z^{i-1}}_{L^2}^2 + 2C_3 \norm{u^i - u ^{i-1}}_{L^2}^2\right] 
    \nonumber \\ &
    +  C_3 \norm{\nabla u^i}_{L^2}^2 
    + \frac12 C_1 \left( \norm{D^2 z^i}_{L^2}^2 + \int \dfrac{\abs{\nabla z^i}^4}{(z^i)^2} \right)
    + \dfrac{s}{2} \int  G^m(n^i) \abs{\nabla z^i}^2 
    %+\dfrac{s \alpha^2}{2} \int \dfrac{G^m(n^i)}{ (z^i)^2} \abs{\nabla z^i}^2
     \nonumber \\ &
     \leq  
%     - \frac{s}{4} \int \nabla G^m(n^i) \cdot \nabla (z^i)^2 
%     +\dfrac{s \alpha^2}{2} \int \dfrac{1}{ z^i} \nabla G^m(n^i) \cdot \nabla z^i
- \frac{s}{2} \int  z^i \left( 1 - \dfrac{\alpha^2}{(z^i)^2}\right) T^m(n^i) (n^i)^{s-2} \nabla n^i  \cdot \nabla z^i 
      +C_2\int \abs{\nabla z^i}^2
     + C_5 \int n^{i} \abs{u^i}.
\end{align}
Now, when testing $n^i$ equation by $(n^i)^{s-1}$
     the Laplacian term  becomes $\nabla(n^i)^{s/2}$ which is singular for $s<2$, and for that reason we translate the test function by $1/j$ and then pass the limit as $j\to +\infty$.
     Thus, testing $n^i$ equation by $(n^i+1/j)^{s-1}/{(s-1)}$, and using Lemma \ref{lema derivada discreta} there follows
  \begin{align*}
    &\frac{1}{s(s-1)} \delta_t \int (n^i +1/j)^s +  \dfrac{4}{s^2}\norm {\nabla(n^i + 1/j)^{s/2}}_{L^2}^2 + \frac{1}{2k} \int (b^i+1/j)^{s-2} (n^i - n^{i-1})^2  \\ &
    = \int T^m(n^i) (n^i+1/j)^{s-2} \nabla (z^i)^2 \cdot  \nabla n^i  .
    \end{align*}
    Since $(b^i+1/j)^{s-2} \geq 0$, we drop this term and arrive at
 \begin{align} \label{en n <2}
    \frac{1}{s(s-1)} \delta_t \int (n^i +1/j)^s 
    +  \dfrac{4}{s^2}\norm {\nabla(n^i + 1/j)^{s/2}}_{L^2}^2 %\nonumber\\ &
    \le \int T^m(n^i) (n^i+1/j)^{s-2} \nabla (z^i)^2 \cdot  \nabla n^i  .
    \end{align}
    So adding $s/4$ times $\eqref{en n <2}$ to $\eqref{en u2}$  we obtain
\begin{align*}
   &\frac{1}{2} \delta_t \left[ \frac{1}{2(s-1)}\norm{(n^i+1/j)}_{L^s}^s 
   + \norm{\nabla z^i}^2_{L^2} 
   + 2C_3 \norm{u^i}_{L^2}^2 \right] 
    + \dfrac{1}{2k} \left[ \norm{\nabla z^i - \nabla z^{i-1}}_{L^2}^2 
    + 2C_3 \norm{u^i - u ^{i-1}}_{L^2}^2 \right] 
    \\     &
    + \dfrac{1}{s}\norm {\nabla(n^i + 1/j)^{s/2}}_{L^2}^2
    +  C_3 \norm{\nabla u^i}_{L^2}^2 
    + \frac12 C_1 \big( \norm{D^2 z^i}_{L^2}^2 
    + \int \dfrac{\abs{\nabla z^i}^4}{(z^i)^2} \big)
    + \dfrac{s}{2} \int  G^m(n^i) \abs{\nabla z^i}^2 \nonumber 
    \\ & 
   % +\dfrac{s\alpha^2}{2} \int \dfrac{ G^m(n^i)}{ (z^i)^2} \abs{\nabla z^i}^2
     \nonumber 
    \leq 
    \frac{s}{2} \int  \dfrac{\alpha^2}{z^i} T^m(n^i) (n^i)^{s-2} \nabla n^i  \cdot \nabla z^i 
      +C_2\int \abs{\nabla z^i}^2
%\dfrac{s\alpha^2 }{2} \int \dfrac{1}{ z^i} \nabla  G^m(n^i) \cdot \nabla z^i
    + C_5 \int n^{i} \abs{u^i}\\ 
     & - \frac{s}{4}\int T^m(n^i) \nabla (z^i)^2 \cdot  \nabla n^i \left( (n^i)^{s-2} - (n^i+1/j)^{s-2} \right).
\end{align*}

%For the first term on the RHS we note that
%\begin{align*}
%     \int \frac{1}{z^i} \nabla G^m( n^i) \cdot \nabla z^i  &=  \int \frac{(n^i)^{s-2}}{z^i}T^m(n^i) \nabla z^i \cdot \nabla n^i , %\\
%     %&= \alpha^2 \int \frac{(n^i)^{s/2-1}}{z^i}T^m(n^i) \nabla z^i \cdot (\frac{n^i}{n^i+1/j})^{s/2-1}\nabla (n^i+1/j)^{s/2} ,
%\end{align*}
%and
For the first term on the RHS we note that
\begin{equation}
     \nabla  n^i = \nabla (n^i +1/j) = \nabla  ((n^i +1/j)^{s/2})^{2/s}  
     = \frac{2}{s}(n^i +1/j)^{1-s/2} \nabla(n^i + 1/j)^{s / 2},
\end{equation}
hence this first term remains 
\begin{equation*} 
    \begin{aligned}
   & %\dfrac{s\alpha^2 }{2} \int \frac{1}{z^i} \nabla G^m( n^i) \cdot \nabla z^i 
    \int   \dfrac{\alpha^2}{z^i} T^m(n^i) (n^i)^{s-2} (n^i +1/j)^{1-s/2} \nabla(n^i + 1/j)^{s / 2} \cdot \nabla z^i   
%   - \frac{s}{2} \int  z^i \left( 1 - \dfrac{\alpha^2}{(z^i)^2}\right) T^m(n^i) (n^i)^{s-2} \nabla n^i  \cdot \nabla z^i    
%    = \alpha^2 \int \frac{(n^i)^{s-2}}{z^i}T^m(n^i)(n^i +1/j)^{1-s/2} \nabla z^i \cdot \nabla (n^i +1/j)^{s/2} \\
   % &\leq \frac{4\alpha}{s^2} \int (n^i)^{s-2}T^m(n^i) (n^i +1/j)^{1-s/2}\abs{\nabla z^i} \abs{\nabla (n^i+1/j)^{s/2}} 
   \\
    &\leq  C(\epsilon)\alpha^2 \int (n^i)^{2s-4} T^m(n^i)^2 (n^i +1/j)^{2-s} \abs{\nabla z^i}^2 
     +  \epsilon  \int \abs{\nabla (n^i+1/j)^{s/2}}^2 \\
     &\leq  C(\epsilon)\alpha^2 s \int G^m(n^i)\abs{\nabla z^i}^2 (n^i)^{s-2} (n^i +1/j)^{2-s} 
     +  \epsilon  \int \abs{\nabla (n^i+1/j)^{s/2}}^2.
    \end{aligned}
\end{equation*}
 In the last line, we have used Lemma \ref{Lema dif T G}. First, we choose $\epsilon$ small enough to absorb the second term and then set a small $\alpha$ accordingly, depending on $\epsilon$. We also note that even with the presence of the singular term $(n^i)^{s-2}$ the whole term is still not singular because close to zero $G^m(n^i)$ is of the order of $(n^i)^s$ making the product $(n^i)^{s-2} G^m(n^i)$ well defined. Overall, for $\alpha$ small enough we get
\begin{equation} \label{estimativa rhs-bis}
    \dfrac{s }{2} \int \frac{\alpha^2}{z^i} T^m(n^i) (n^i)^{s-2} \nabla n^i  \cdot \nabla z^i   \leq \frac{s}{4} \int G^m(n^i)\abs{\nabla z^i}^2 (n^i)^{s-2} (n^i +1/j)^{2-s} + \frac{1}{4s} \int \abs{\nabla (n^i+1/j)^{s/2}}^2.
\end{equation}

 For the source term of the $u^i$ system, 
  applying Lemma \ref{lema GNS produto} with $f = n^{i}$ and $g = u^i$ and choosing adequate $\epsilon_1$ and $\epsilon_2$ we have that
$$
 C_5 \int_{\Omega} n^{i} \abs{u^i} 
 \leq  C_5 \int_{\Omega} (n^{i} +1/j)\abs{u^i} 
 \leq \frac{C_3}{2} \norm{\nabla u^i}^2_{L^2} + \frac{1}{4s}\norm{\nabla (n^i+1/j)^{s/2}}_{L^2}^{2} + C_6.
$$
We can also estimate 
\begin{equation}\label{est grad z L2}
    C_2\int \abs{\nabla z^i}^2 = C_2\int \frac{\abs{\nabla z^i}^2}{z^i}z^i %\\    &
   \leq \epsilon \int \frac{\abs{\nabla z^i}^4}{(z^i)^2} + C_ \epsilon. %(\norm{z^0}_{L^\infty}).
\end{equation}

Dropping the gradient term of $(n^i+1/j)^{s/2}$, avoiding any division by zero, we get that
 \begin{align*}
   &\frac{1}{2}  \delta_t \left[ \frac{1}{2(s-1)}\norm{n^i+1/j}_{L^s}^s + \norm{\nabla z^i}^2_{L^2} + 2C_3 \norm{u^i}_{L^2}^2 \right] 
    + \dfrac{1}{2k} \left[ \norm{\nabla z^i - \nabla z^{i-1}}_{L^2}^2 
    + 2C_3 \norm{u^i - u ^{i-1}}_{L^2}^2 \right] \\ 
    & + \frac{C_3}{2}\norm{\nabla u^i}_{L^2}^2 
    +\frac14 C_1 \big( \norm{D^2 z^i}_{L^2}^2 
    + \int \dfrac{\abs{\nabla z^i}^4}{(z^i)^2} \big)
    + \dfrac{s}{2} \int  G^m(n^i) \abs{\nabla z^i}^2 \Bigl(1 - \dfrac{(n^i)^{s-2} (n^i +1/j)^{2-s} }{2} \Bigr)
    \nonumber \\ &\leq  C_6
    - \frac{s}{4} \int T^m(n^i)  \nabla (z^i)^2 \cdot \nabla n^i \left( (n^i)^{s-2} - (n^i+1/j)^{s-2} \right) .
   % +C_2\int \abs{\nabla z^i}^2.
\end{align*}

Finally, we can pass to the limit as $j \to \infty$, via the Dominated Convergence Theorem, to finish the proof, remarking only that 
\begin{equation}
\begin{aligned}
    \int  G^m(n^i) \abs{\nabla z^i}^2 (n^i)^{s-2} (n^i +1/j)^{2-s} &\leq \dfrac{1}{s}\int \abs{\nabla z^i}^2 (n^i)^s(n^i)^{s-2} (n^i +1/j)^{2-s}  
%    \\
%    &= \dfrac{1}{s}\int \abs{\nabla z^i}^2 (n^i)^{2s-2} (n^i +1/j)^{2-s} \\
%    &\leq \dfrac{1}{s}\int \abs{\nabla z^i}^2 (n^i+1)^{2s-2} (n^i +1)^{2-s}
    \\&\le  \dfrac{1}{s}\int \abs{\nabla z^i}^2 (n^i+1)^{s},
\end{aligned}
    \end{equation}
    which is a $L^1$ function, and then 
    \begin{equation}
         \dfrac{s}{2} \int  G^m(n^i) \abs{\nabla z^i}^2 (1 - \dfrac{(n^i)^{s-2} (n^i +1/j)^{2-s} }{2} ) \to \dfrac{s}{4} \int  G^m(n^i) \abs{\nabla z^i}^2.
    \end{equation}
    The remaining terms can be treated similarly.
\end{proof}

\begin{lema} \label{en s>2} (Energy inequality for $s \geq 2$).
       Let $(n^i,z^i,u^i)$ be a solution of \eqref{Sist Discretizado-bis} with $s \geq 2$. Then, if $\alpha$ is small enough,  there exist constants $C_1,C_2,C_3,C_4 >0$ such that 
\begin{align*} 
   &\delta_t\frac{1}{2}  \left[ \frac{1}{2(s-1)} \norm{n^i}_{L^s}^s + \norm{\nabla z^i}^2_{L^2} + C_1 \norm{u^i}_{L^2}^2\right] 
    + \dfrac{1}{2k} \left[ \norm{\nabla z^i - \nabla z^{i-1}}_{L^2}^2 +  C_1 \norm{u^i - u ^{i-1}}_{L^2}^2   \right] \nonumber 
    \\     &
    + \dfrac{1}{4s}\norm {\nabla(n^i)^{s/2}}_{L^2}^2
    +  C_2 \norm{\nabla u^i}_{L^2}^2 +C_3 \left( \norm{D^2 z^i}_{L^2}^2 
    + \int \dfrac{\abs{\nabla z^i}^4}{(z^i)^2} \right) 
    + \dfrac{s}{4} \int  G^m(n^i) \abs{\nabla z^i}^2
     \leq C_4.
\end{align*}
\end{lema}
\begin{proof}
  Up until \eqref{en u2} there is no distinction whether $s \in (1,2)$ or $ s \geq 2$. Now as  there is no singularity, testing $n^i$ equation by $(n^i)^{s-1}/{(s-1)}$ and using Lemma \ref{lema derivada discreta} yields

\begin{align} \label{en n +2}
    \frac{1}{s(s-1)} \delta_t \int (n^i)^s 
    +    \dfrac{4}{s^2}\norm {\nabla(n^i)^{s/2}}_{L^2}^2 
    \leq \int T^m(n^i) (n^i)^{s-2} \nabla (z^i)^2 \cdot  \nabla n^i.
\end{align}

Adding $s/4$ times $\eqref{en n +2}$ to $\eqref{en u2}$, we obtain
\begin{align*}
   &\delta_t\frac{1}{2}  \left[ \frac{1}{2(s-1)}\norm{n^i}_{L^s}^s + \norm{\nabla z^i}^2_{L^2} + 2C_3 \norm{u^i}_{L^2}^2 \right] 
    + \dfrac{1}{2k} \left[ \norm{\nabla z^i - \nabla z^{i-1}}_{L^2}^2 + 2C_3 \norm{u^i - u ^{i-1}}_{L^2}^2 \right] \\ 
    &+ \dfrac{1}{s}\norm {\nabla(n^i)^{s/2}}_{L^2}^2
    +  C_3 \norm{\nabla u^i}_{L^2}^2 +\frac12 C_1 \Bigl( \norm{D^2 z^i}_{L^2}^2 
    + \int \dfrac{\abs{\nabla z^i}^4}{(z^i)^2} \Bigr)
    + \dfrac{s}{2} \int  G^m(n^i) \abs{\nabla z^i}^2 \nonumber
    \\ &
   %  + \dfrac{s \alpha^2}{2} \int \dfrac{G^m(n^i)}{ (z^i)^2} \abs{\nabla z^i}^2 
     \leq 
     \frac{s}{2} \int  \dfrac{\alpha^2}{z^i} T^m(n^i) (n^i)^{s-2} \nabla n^i  \cdot \nabla z^i 
%+\dfrac{s \alpha^2 }{2} \int \dfrac{1}{ z^i} \nabla G^m(n^i) \cdot \nabla z^i +  
+C_2\int \abs{\nabla z^i}^2
     + C_5 \int n^{i} \abs{u^i}. 
\end{align*}

Following the same calculations  to get \eqref{estimativa rhs-bis} changing $(n^i+1/j)$ by $n^i$, we have, for $\alpha$ small enough, that
 \begin{align*} 
     \frac{s}{2} \int \frac{ \alpha^2}{z^i}  T^m(n^i) (n^i)^{s-2} \nabla n^i \cdot \nabla z^i 
     &\leq  \frac{s }{4 } \int G^m(n^i) \abs{\nabla z^i}^2 
     + \frac{1}{4s }  \int  \abs{\nabla (n^i)^{s/2}}^2.
 \end{align*}

    For the source term of $u^i$ system, applying Lemma \ref{lema GNS produto} with $f = n^{i}$ and $g = u^i$ and choosing adequate $\epsilon_1$ and $\epsilon_2$, we get
$$
 C_5 \int_{\Omega} n^{i} \abs{u^i}  \leq \frac{C_3}{2} \norm{\nabla u^i}^2_{L^2} + \frac{1}{4s}\norm{\nabla (n^i)^{s/2}}_{L^2}^{2} + C_6.
$$
By plugging \eqref{est grad z L2},   we obtain the result.
\end{proof}

%%%%%%%%%%%%%%%%%%%%%%%%%%%%%%%%%%%%%%%%%%%%%

\section{Energy \texorpdfstring{$(m,k)$}{(m,k)}-estimates and passage to the limit as \texorpdfstring{$(m,k) \to (\infty,0)$}{(m,k) -> (inf,0)}} \label{secao 5}

 With the $(m,k)$-estimates which will provide  Lemmas \ref{lema uniform estimat}-\ref{en s>2} at hand, we are now prepared to prove our main result, Theorem \ref{resultado principal}.  
We start rewriting the scheme \eqref{Sist Discretizado-bis} as a continuous in-time system, by 
% using the functions $u^i, P^i, z^i$, and $n^i$, defined in the time step $i$ at the time interval $I_i=(t_{i-1}, t_i]$. 
considering the piecewise constant function $n_m^k$ and the locally linear and globally continuous function $\tilde{n}_m^k$ defined  by 
$$n_m^k(t, x)=n^i(x)
\quad \text { if } t \in I_i
$$
 and
$$
\tilde{n}_m^k(t, x)=n^i(x)+\frac{\left(t-t_i\right)}{k}\left(n^i(x)-n^{i-1}(x)\right) 
\quad \text { if } t \in I_i.
$$
Analogously, we define the functions $z_m^k, \tilde{z}_m^k, u_m^k$, $\tilde{u}_m^k$ and $P^k_m$.
We also define 
$$\hat{u}_m^k (t,x) = u^{i-1}(x) \quad \text {if  $t \in I_i$.}$$
In particular, one has  $\partial_t \tilde{n}_m^k = \delta_t n^i$ for $t \in I_i$.
Then, the discrete scheme \eqref{Sist Discretizado-bis} can be rewritten as
% the fully coupled and nonlinear time scheme

\begin{equation} \label{sistema edp}
\left \{ \begin{aligned}
    & \partial_t \tilde{n}_m^k + u_m^k \cdot \nabla n_m^k -\Delta n_m^k 
    + \nabla \cdot\left(T^m(n_m^k) \nabla(z_m^k)^2\right)=0, \\
& \partial_t \tilde{z}_m^k + u_m^k \cdot \nabla z_m^k -\Delta z_m^k
- \frac{\left|\nabla z_m^k\right|^2}{z_m^k}
=-\frac{s}{2} G^m(n_m^k) \left( z_m^k-\frac{\alpha^2}{z_m^k}   \right) ,  \\
& \partial_t  \tilde{u}_m^k +(\hat{u}_m^k \cdot \nabla) u_m^k - \Delta u_m^k +\nabla P_m^k =T^m(n_m^k) \nabla \Phi,  \\
 & \nabla \cdot u_m^k  =0, \\
 & \left.\partial_\eta \tilde{n}_m^k\right|_{\Gamma}=\left.\partial_\eta \tilde{z}_m^k \right|_{\Gamma}= \left. \tilde{u}_m^k\right|_{\Gamma}= 0 , \quad \tilde{n}(0) = n^0, \quad \tilde{z}(0) = z^0, \quad \tilde{u}(0) = u^0.
\end{aligned} \right.
\end{equation}

As a direct consequence of those definitions, from estimate \eqref{lema est pontual z-bis} and Lemma \ref{lema uniform estimat}, one has 
\begin{equation}\label{n-L1,z-Linf}
\int n^k_m = \int n^0\quad \text{and} \quad \norm{z^k_m}_{L^\infty} \leq \norm{z^0}_{L^\infty},
\end{equation}
and the following bound (independent of $m,k$) holds
\begin{equation} \label{z linf l2 s<2}
   \nabla z_m^k \text{  in } L^2(0, \infty;L^2).
\end{equation}

\subsection{First energy estimates} 
  We sum up the energy inequalities obtained in Lemmas \ref{en s<2} and \ref{en s>2}, from $j=1$ to $i$, for any $i$, and multiply it by $k$ to obtain
   \begin{align} \label{en dado inicial s<2}
\displaystyle
       &   \frac{1}{4(s-1)}\int\left(n^i\right)^s 
        + \frac{1}{2} \int \abs{\nabla z^i}^2 
        + \frac{C_1}{2}\int \abs{u^i}^2  
        + \dfrac{1}{2}  \sum_{j=1}^i \left[ \norm{\nabla z^j - \nabla z^{j-1}}_{L^2}^2 
        + C_1 \norm{u^j - u ^{j-1}}_{L^2}^2\right] 
         \nonumber \\ 
         \displaystyle
      &+ k\sum_{j=1}^i \left(C_2  \norm{\nabla u^j}^2_{L^2} 
       +  C_3 \left(\norm{D^2 z^j}_{L^2}^2 
       + \int \frac{\abs{\nabla z^j}^4}{(z^j)^2}\right) 
       +  \frac{s}{4} \int  G^m(n^j) \abs{\nabla z^j}^2 \right)  
       \nonumber  \\
        \displaystyle
        &\leq    
          \frac{1}{4(s-1)}\int (n^0)^s
        + \frac{1}{2} \int \abs{\nabla z^0}^2 
        +  \frac{C_1}{2} \int \abs{u^0}^2
        +  C_4 \, t_i  .
         \end{align}
         
         Note that, for each regularized initial data $(n^0_j,z^0_j,u^0_j)$ 
 satisfying  \eqref{reg n0},  \eqref{compat-0} and \eqref{approx-0},  the estimates obtained from the energy inequality only depend on 
 $\| n^0_j\|_{L^s}^s + \|\nabla z^0_j\|^2_{L^2} +  \|u^0_j\|_{L^2}^2$ which is bounded independently of $j$. Then we may conclude that the RHS of \eqref{en dado inicial s<2} is bounded at finite time, therefore  for any fixed $T>0$, the following bounds hold
\begin{equation} \label{grad z linf l2 s<2}
   \nabla z_m^k \text{  in } L^\infty(0,T;L^2) \cap L^2(0,T;H ^1) \cap L^4(0,T;L^4),
\end{equation}
\begin{equation} \label{u linfl2 s<2}
      u_m^k \text{  in }L^\infty(0,T;H)\cap L^2(0,T;V),
\end{equation}

\begin{equation} \label{n linf ls s<2}
     n_m^k \text{  in } L^\infty(0,T;L^s),
\end{equation}
%and
\begin{equation} \label{grad u lim l2 l2}
  G^m(n_m^k)^{1/2} \nabla z_m^k  \text{  in }L^2(0,T;L^2).
\end{equation}
Note that by Lemma \ref{Lema dif T G} one has $T^m(n^i)^s \leq  s \, G^m(n^i)$, hence from \eqref{grad u lim l2 l2}
\begin{align} \label{lim tm s grad z l2l2} 
 T^m(n^i)^{s/2}  \nabla z^i \text{ is bounded in } L^2(0,T;L^2) .  
\end{align}
On the other hand, estimates \eqref{n-L1,z-Linf}  and \eqref{grad z linf l2 s<2}    imply that 
\begin{equation}\label{grad z lim}
   z_m^k \text{ is bounded in } L^\infty(0,T;H^1\cap L^\infty)\cap L^2(0,T;H^2).
 \end{equation}
By starting from \eqref{u linfl2 s<2}  and using  $3D$ interpolation, one also has  
\begin{align} \label{u l2l6 lim 10/3 10/3}
    u_m^k  \text{ is bounded in }
  %  L^\infty(0,T;H)\cap L^2(0,T;V) \cap  
    L^{10/3}(0,T;L^{10/3}).
\end{align}
Lemma \ref{lema uniform estimat} and energy estimate \eqref{en dado inicial s<2} also  imply 
%that there exists a positive constant $C$, independent of $m$ and $k$, such  that
$$
\sum_{j=1}^i \norm{z^j - z ^{j-1}}_{H^1}^2 + \sum_{j=1}^i \norm{u^j - u ^{j-1}}_{L^2}^2 
\leq C \,T,\quad \forall\, i \geq 1,
$$
hence we may infer, 
%through standard calculations, that there exists a positive constant $C$ independent of $m$ and $k$ such 
%
\begin{equation} \label{conv cont pra por partes}
     \norm{z_m^k - \tilde{z}_m^k}_{L^2(0,T;H^1)}^2 + \norm{u_m^k - \tilde{u}_m^k}_{L^2(0,T;L^2)}^2 + \norm{\hat{u}_m^k-\tilde{u}_m^k}_{L^2\left(0, T ; L^2\right)}^2 
     \leq C \,T\, k .
\end{equation}

\subsection{Additional estimates for \texorpdfstring{$s\in(1,2)$}{s in (1,2)}.}
We next obtain an estimate for the difference $n_m^k-\tilde{n}_m^k$ and for $n^k_m$ and $\nabla n^k_m$. 
\begin{lema} \label{le:5.1}
   For $s \in (1,2)$, there exists $C>0$, independent of $m$ and $k$, such that
   \begin{equation} \label{conv n ntil}
    \norm{n_m^k-\tilde{n}_m^k}_{L^2\left(0, T ; L^s \right)}^2 \leq C\, k .
\end{equation}
 Moreover, the following estimates hold
  \begin{align} \label{lim n 5s 3 5s 3 s<2}
    n_m^k \text{  in } L^{5s/3}(0,T;L^{5s/3}),
\end{align}
    and
 \begin{equation} \label{lim grad n l2ls}
     \nabla n_m^k  \text{  in }  L^{5s/(s+3)}(0,T;L^{5s/(s+3)}).
     %L^2\left(0, T ; L^s\right).
\end{equation}
\end{lema}

\begin{proof}
    %Let
    %$$
%f(r)= \frac{r^{s}}{s(s-1)} \quad \forall\, r>0.
%$$
Testing $n^i$ equation of \eqref{Sist Discretizado-bis} 
by $ (n^i+1)^{s-1}/(s-1)$, and using Lemma \ref{lema derivada discreta}, we have 
\begin{align*}
    &\frac{1}{s(s-1)} \delta_t \int (n^i +1)^s +  \dfrac{4}{s^2}\norm {\nabla(n^i + 1)^{s/2}}_{L^2}^2 + \frac{1}{2k} \int (b^i +1)^{s-2} (n^i - n^{i-1})^2  \\ &
    = \int T^m(n^i) (n^i+1)^{s-2} \nabla (z^i)^2 \cdot  \nabla n^i  ,
    \end{align*}
    where $b^i(x)$ is an intermediate function between $n^i(x)$ and $n^{i-1}(x)$.
%\begin{align*}
 %   &\delta_t \int f\left(n^i+1\right) 
%+\frac{1}{2 k} \int f'' \left(b^i+1\right)\left(n^i-n^{i-1%}\right)^2 
 %\\ + &\int u^i \cdot \nabla n^i f'(n^i+1) - \int \Delta n^i f'(n^i+1) 
%=   - \int \nabla \cdot (T^m(n^i) \nabla (z^i)^2) f'(n^i+1).
%\end{align*}
%As
%$$
%\int u^i \cdot \nabla n^i f'(n^i+1) = \int u^i \cdot \nabla (f(n^i+1)) =0,
%$$
%and
%\begin{align*}
%-\int \Delta n^i f'(n^i+1) &= \int \abs{\nabla n^i}^2 %(n^i+1)^{s-2}
% =  \frac{4}{s^2} \int \abs{\nabla(n^i+1)^{s/2}}^2,
%\end{align*}
Then by using  that $1-s/2 >0$ (recall that $s<2$), the equality
$$
(n^i +1)^{s/2-1} \nabla  n^i 
%= \nabla (n^k_m +1) = \nabla  ((n^k_m +1)^{s/2})^{2/s} 
 = \frac{2}{s} \nabla(n^i + 1)^{s / 2},
$$
 and the inequality  $T^m(n^i) \leq n^i$, 
one has 
\begin{align*} 
    &\frac{1}{s(s-1)} \delta_t \int (n^i +1)^s +  \dfrac{4}{s^2}\norm {\nabla(n^i + 1)^{s/2}}_{L^2}^2 
   % + \frac{1}{2k} \int (b^i +1)^{s-2} (n^i - n^{i-1})^2  
    \nonumber\\ &
    \le 2 \int T^m(n^i)(n^i+1)^{s / 2-1} z^i \nabla z^i \cdot \nabla n^i(n^i+1)^{s / 2-1}  \nonumber\\
& = \frac{4}{s} \int \frac{T^m\left(n^i\right)^{1-s / 2}}{\left(n^i+1\right)^{1-s / 2}} T^m(n^i)^{s / 2} z^i \nabla z^i \cdot \nabla(n^i+1)^{s / 2} \nonumber  \\
& \leq C \left\|z^0\right\| ^2_{L^{\infty}}\int T^m(n^i)^s\left|\nabla z^i\right|^2 
 + \dfrac{2}{s^2}\norm {\nabla(n^i + 1)^{s/2}}_{L^2}^2.
    \end{align*}
Thus, we arrive at 
    \begin{equation} \label{en aux s<2}
\frac{1}{s(s-1)} \delta_t \int (n^i +1)^s +  \dfrac{2}{s^2}\norm {\nabla(n^i + 1)^{s/2}}_{L^2}^2 
\leq C \int T^m(n^i)^s\left|\nabla z^i\right|^2 .
\end{equation}

Now multiplying \eqref{en aux s<2} by $k$ and summing up, from $1$ to $i$, for any $i \in \mathbb{N}$, we obtain
$$
\begin{aligned}
&\frac{1}{s(s-1)} \int (n^i +1)^s +  \frac2{s^2} k \sum_{j=1}^i\norm {\nabla(n^j + 1)^{s/2}}_{L^2}^2 
%+ \frac{1}{2} \sum_{j=1}^i\int (b^j +1)^{s-2} (n^j - n^{j-1})^2 
 \\
&\leq C k \sum_{j=1}^i \int T^m(n^j)^s\left|\nabla z^j\right|^2  
 +\frac{1}{s(s-1)}\int (n^0+1)^s \\
 &\leq C_1(T)  + %\frac{C_3}{s(s-1)}
 C_2\left( \int (n^0)^s + |\Omega|\right),
\end{aligned}
$$
where estimate \eqref{lim tm s grad z l2l2} has been applied in the last inequality.  
Therefore, we infer 
%\begin{equation}
%  \sum_{j=1}^{i} \int \left(b^j+1\right)^{s-2}\left(n^j-n^{j-1}\right)^2  \leq C(T),  
%\end{equation}
%\begin{equation}\label{n-inf-s}
%(n^k_m +1) \text{ is bounded in } L^\infty(0,T;L^s),
%\end{equation}
%and
%\begin{equation}  \label{grad-n-s/2}
%    \nabla(n_m^k +1)^{s / 2}\text{ is bounded in } L^2(0,T ; L^2),
%\end{equation}
%hence in particular
\begin{equation} \label{(n+1)^s/2-weak}
 (n_m^k + 1)^{s/2} \text{ is bounded in } L^\infty(0,T;L^2) \cap L^2(0,T;H^1).
 \end{equation}
Note that \eqref{(n+1)^s/2-weak} implies, by interpolation, that
$$
    (n_m^k+1) ^{s/2} \text{ is bounded in } 
    %L^{\infty}(0,T;L^{2})\cap L^{2}(0,T;H^{1})\cap 
    L^{10/3}(0,T;L^{10/3}),
$$
hence \eqref{lim n 5s 3 5s 3 s<2} is proved. 

 On the other hand, we can write
$$
\nabla  n^k_m 
%= \nabla (n^k_m +1) = \nabla  ((n^k_m +1)^{s/2})^{2/s} 
 = \frac{2}{s}(n^k_m +1)^{1-s/2} \nabla(n_m^k + 1)^{s / 2}.
$$
Since $s<2$, from \eqref{lim n 5s 3 5s 3 s<2} we  get that $(n^k_m +1)^{1-s/2}$ is bounded in 
$L^{10 s/ 3(2-s)}(0,T; L^{10 s/ 3(2-s)})$, which jointly with \eqref{(n+1)^s/2-weak} 
imply \eqref{lim grad n l2ls}.
  
Finally, using previous bounds and following line by line \cite[Lemma 4.2]{Guillen-Gonzalez2023-gy}, we can obtain \eqref{conv n ntil}.
%that there exists $C>0$, independent of $m$ and $k$, such that $ \norm{n_m^k-\tilde{n}_m^k}_{L^2\left(0, T ; L^s \right)}^2 \leq C\, k$. 
\end{proof}

The next lemma provides uniform in time energy estimates.
\begin{lema} \label{energy-estim s<2}
For $ s \in(1,2)$, we have 
 \begin{equation}  \label{uniform in time energy estimates}
(n_m^k, \nabla z_m^k,u_m^k) \text{ is bounded in } L^\infty(0,\infty;L^s\times L^2\times H).
\end{equation}
\end{lema}
\begin{proof}
 By Lemma \ref{Lema dif T G} we have that $T^m(n^i)^s \leq sG^m(n^i)$, so we can add  $1/(8C)$ times \eqref{en aux s<2} to the energy inequality given in Lemma \ref{en s<2}, and obtain
the recursive inequality
\begin{equation} \label{recursive inequality}
\delta_t a_i + d_i \leq C_4,
\end{equation}
where
\begin{align} \label{Def Funcional Energia Discreto s<2}
    a_i :=  \frac{1}{4(s-1)} \norm{n^i}_{L^s}^s+ \frac{1}{8Cs(s-1)} \norm{n^i+1}_{L^s}^s +  \frac{1}{2}\norm{\nabla z^i}_{L^2}^2 + \frac{C_1}{2}\norm{u^i}_{L^2}^2 ,
\end{align}
and 
\begin{align} \label{Def funcional dispersivo s<2}
    d_i :=  \dfrac{1}{4Cs^2}\norm {\nabla(n^i+1)^{s/2}}_{L^2}^2
    +  C_2 \norm{\nabla u^i}_{L^2}^2 +C_3 \left( \norm{D^2 z^i}_{L^2}^2 
    + \int \frac{\abs{\nabla z^i}^4}{(z^i)^2} \right)
    + \dfrac{s}{8} \int  G^m(n^i) \abs{\nabla z^i}^2 .
\end{align}

Now we show that there exist $K_1, K_2>0$  such that 
\begin{equation} \label{est ai}
    a_i \leq K_1 d_i + K_2,  \quad \text{ for all } i\ge 1.
\end{equation}
We estimate each term of $a_i$ separately.  
For $n^i$, we note that by applying Poincaré's inequality in $(n^i+1)^{s/2}$ one has
     \begin{align} \label{est n+1a}
      \norm{n^i+1}_{L^s}^s =      \norm{(n^i+1)^{s/2}}_{L^2}^2 &\leq C\norm{\nabla (n^i+1)^{s/2}}_{L^2}^2 + C \norm{(n^i+1)^{s/2}}_{L^1}^2.
     \end{align}
For the second term on the RHS, as $ s < 2 $, by using H\"older inequality, we have that
    \begin{equation}\label{est n+1}
     \norm{(n^i+1)^{s/2}}_{L^1} ^2 \leq |\Omega|^{2-s}\norm{n^i+1}_{L^1}^s \leq C\bigl(\norm{n^0}_{L^1}\bigr),
       \end{equation}
       where in the last estimate  we have used Lemma \ref{lema uniform estimat}.
   %
   %For the second term on the RHS, by interpolation, there exists some $a\in (0,1)$ such that 
   % \begin{align*}
   %  \norm{(n^i+1)^{s/2}}_{L^1} ^2   & = \norm{n^i+1}^{s}_{L^{s/2}} \leq \norm{n^i+1}_{L^s}^{sa} \norm{n^i+1}_{L^1}^{s(1-a)} 
   %          \\& \leq \epsilon \norm{n^i+1}^{s}_{L^s} + C(\norm{n^0}_{L^1},\epsilon),
   %     \end{align*}
   %     where we have used Lemma \ref{lema uniform estimat}.
       Therefore,
      \begin{align*}
        \frac{1}{4(s-1)} \norm{n^i}_{L^s}^s+ \frac{1}{8C(s-1)} \norm{n^i+1}_{L^s}^s &\leq C \norm{\nabla (n^i+1)^{s/2}}^2_{L^2} + K_2 \leq K_1 d_i + K_2.
       \end{align*}
  For $z^i $, we use its $L^\infty$ bound
    \begin{align*}
        \norm{\nabla z^i}^2_{L^2} &\leq \norm{z^i}_{L^\infty} \int \frac{\abs{\nabla z^i}^2}{z^i} \leq \frac{1}{2}  \int \frac{\abs{\nabla z^i}^4}{(z^i)^2} + K_2 \leq \frac{1}{2}d_i + K_2.
    \end{align*}
Finally,     for $u^i$ we just note that by Poincaré's inequality, there exists $C >0 $ such that
    \begin{equation}
        \norm{u^i}_{L^2}^2 \leq  C \norm{\nabla u^i}^2_{L^2}  \leq K_1 d_i.
    \end{equation}
Plugging the previous bounds, we find \eqref{est ai}.
Hence, the recursive inequality \eqref{recursive inequality} can be written as
\begin{equation} \label{global-dt-ineq}
\delta_t a_i  + C_6 a_i \leq C_4+C_5 . 
\end{equation}
Then, by applying Lemma \ref{Lema Gronwall Discreto} with constants $\lambda = C_6$ and  $C = C_4+C_5$ yields to 
\begin{align*}
    a_i \leq a_0 + C/C_6, \quad \forall\, i \ge 1.
\end{align*} 
By taking into account the expression of $a_i$, one has the uniform in time energy estimates \eqref{uniform in time energy estimates}. 
\end{proof}

Now, we look for an estimate for $\partial_t \tilde{n}^k_m$. 
Considering  the  chemotaxis flux, as $s<2$ we may write $T^m(n_m^k) \nabla(z_m^k)^2$ as
$$
T^m(n_m^k) \nabla(z_m^k)^2
=2\, T^m(n_m^k)^{1-s / 2} T^m(n_m^k)^{s / 2} z_m^k \nabla z_m^k .
$$
Then, using from \eqref{lim n 5s 3 5s 3 s<2} that  $(n^k_m )^{1-s/2}$ (and $T^m(n_m^k)^{1-s / 2}$) is bounded in 
$L^{10 s/ 3(2-s)}(0,T; L^{10 s/ 3(2-s)})$,
%estimate  \eqref{n linf ls s<2} 
%%and the equality $\left(T^m(n_m^k)^{1-s / 2}\right)^{2s/(2-s)} = T^m(n_m^k)^{s},$
%we have that $T^m(n_m^k)^{1-s / 2}$ is bounded in $L^{\infty}(0, T ; L^{2 s /(2-s)})$, 
and using also estimates \eqref{n-L1,z-Linf} and  \eqref{lim tm s grad z l2l2} we conclude
\begin{equation} \label{tm grad z2 l2 ls}
    T^m(n_m^k) \nabla(z_m^k)^2 \text { is bounded in } L^{5s/(s+3)}(0,T;L^{5s/(s+3)}). 
  %  L^2(0,T; L^s) .
\end{equation}
On the other hand, we rewrite the coupled term $u^k_m \cdot \nabla n^k_m=\nabla\cdot (u^k_m n^k_m ) $, and from \eqref{u l2l6 lim 10/3 10/3} and
 \eqref{lim n 5s 3 5s 3 s<2}  
\begin{equation} \label{un 10s10s}
    u^k_m n^k_m  \text { is bounded in }L^{10s/(3s+6)}(0,T;L^{10s/(3s+6)}).
\end{equation}
By applying the previous estimates in the equation $\eqref{sistema edp}_1$, the following 
 bound of the time derivative of $\tilde{n}^k_m$ holds 
\begin{equation} \label{d-t-n}
    \partial_t \tilde{n}^k_m  \text { is bounded in }  L^{10/9}(0,T;(W^{1,10})').
\end{equation}

\

%where $q^*$ is the H\"older conjugate of $q = \min \{ 10/9,s\}$. In particular,  for every $s>1$,  we get that
%\begin{equation} \label{nt lim}
%\partial_t \tilde{n}^k_m  \text { is bounded in }  L^{10/9}(0,T;(W^{1,\infty})^\prime).
%\end{equation}

   Next, we deduce an estimate for $\partial_t z^k_m$. Note that by \eqref{grad z linf l2 s<2} 
   and \eqref{lema est pontual z-bis},
    $$
    \frac{\abs{\nabla z_m^k}^2}{z_m^k}  \text { is bounded in } L^2(0,T;L^2).
    $$
    Moreover as $s\, G^m(n_m^k) \leq (n_m^k)^s$,
     which is bounded in $L^{5/3}(0,T;L^{5/3})$ owing to \eqref{lim n 5s 3 5s 3 s<2}, 
%    
%     and using that $(n_m^k) $ is bounded in $L^2(0,T;W^{1,s})\cap L^\infty(0,T;L^s)$, then by the embedding of $W^{1,s}$ into $L^{3s/(3-s)}$
%    $$
%    \norm{(n_m^k)^s}_{L^{3/2}}^2 \leq C \norm{n_m^k }_{L^s}^{2(s-1)}\norm{n_m^k }_{L^{3s/(3-s)}}^{2} ,
%    $$
    then 
    $$
    G^m(n_m^k), \ \frac{G^m(n_m^k)}{z_m^k}  \text { are bounded in } L^{5/3}(0,T;L^{5/3}).
    $$
    For the convective term $ u_m^k \cdot \nabla z_m^k$, since $u^k_m $ and $\nabla z^k_m $ are bounded in $ L^{10/3}(0, T ; L^{10/3})$, it holds
\begin{equation} \label{uz reg}
    u_m^k \cdot \nabla z_m^k  \text { is bounded in } L^{5/3}(0,T;L^{5/3}).
\end{equation}
Finally, from   equation $\eqref{sistema edp}_2$ we may conclude that
\begin{equation} \label{zt lim l2l3/2}
    \partial_t \tilde{z}_m^k  \text { is bounded in } L^{5/3}(0,T;L^{5/3}).
\end{equation}

%%%%%%%%%%%%%%%%%%%%%%%%%%%%%%%%%%%%%%%%

\subsection{Estimates for linear in time approximations in the case \texorpdfstring{$s\in(1,2)$}{s in (1,2)}.} \label{est lin por partes}
In order to obtain estimates for the global continuous  and local linear in-time sequence $\tilde{n}_m^k$ (and $\tilde{z}_m^k, \tilde{u}_m^k$), we use the triangular inequality
$$
\norm{\tilde{n}_m^k(t)} \leq \norm{n_m^k(t)} 
+\left\|n^{j-1}\right\|, \quad \forall \, t \in I_j .
$$
Since there is a dependence on the previous time step,  more regular initial data is required, as we have stated in \eqref{reg n0}-%, \eqref{compat-0}, \eqref{approx-0}, \eqref{reg c0} and 
\eqref{reg u0}. 
Then,  from \eqref{n linf ls s<2}, \eqref{lim n 5s 3 5s 3 s<2} and \eqref{lim grad n l2ls}, and the initial estimate  \eqref{reg u0}, we infer 
\begin{equation} \label{tilde n lim}
   {n}_m^k ,\  \tilde{n}_m^k \text{ are bounded in }   L^{\infty}(0, \infty ; L^{s})  \cap L^{5s/3}(0, T ;L^{5s/3}) \cap L^{5s/(s+3)}(0, T ; W^{1,5s/(s+3)}).
\end{equation}
On the other hand, from \eqref{grad z lim} and \eqref{u linfl2 s<2}, and the initial estimate \eqref{reg u0}, one has
\begin{equation} \label{tilde z lim}
    {z}_m^k, \ \tilde{z}_m^k \text{ are bounded in }   L^{\infty}(0, \infty ; L^{\infty}\cap H^1) \cap L^{2}(0, T ; H^2),
\end{equation}
and  
\begin{equation} \label{tilde u lim}
    {u}_m^k,\ \tilde{u}_m^k,\ \hat{u}_m^k \text{ are bounded in }   L^{\infty}(0, \infty ; H) \cap L^{2}(0, T ; V).
    %\cap  L^{10/3}(0,T;L^{10/3}).
\end{equation}

%Since $\hat{u}_m^k$ is defined as the previous time step it also has a dependence on the initial data. In fact, 
%\begin{align*}
%    \norm{\hat{u}_m^k }_{X(0,T;Y)} &= \norm{u^0}_{Y} + \norm{u_m^k}_{X(0,T-k;Y)}.
%\end{align*}
To estimate $\partial_t \tilde{u}_m^k$ from  equation $\eqref{sistema edp}_3$, we observe that $\hat{u}_m^k \otimes u_m^k $ and $ n^k_m $ are bounded in $ L^{5/3}(0,T;L^{5/3}) $,
%\begin{align*}
 %   &\norm{\partial_t \tilde{u}_m^k}_{L^{5/3}(0,T;(W^{1,5/2}_{0,\sigma})^\prime)} 
  %  \leq \int_0^T \norm{\hat{u}_m^k}_{L^{10/3}}^{5/3} \norm{u_m^k}_{L^{10/3}}^{5/3}
   % +  \int_0^T  \norm{\nabla u^k_m }_{L^{2}}^{5/3} 
   % + \int_0^T  \norm{n^k_m}_{L^{5/3}}^{5/3} \\
   %&\leq   \frac{1}{2} \int_0^T \norm{\hat{u}_m^k}_{L^{10/3}}^{10/3} 
   %+ \frac{1}{2} \int_0^T \norm{u_m^k}_{L^{10/3}}^{10/3} 
   %+ \int_0^T  \norm{\nabla u^k_m }_{L^{2}}^{5/3} 
   %+ \int_0^T  \norm{n^k_m}_{L^{5/3}}^{5/3} \\
    % &\leq C + C_1 \int_0^T  \norm{\nabla u^k_m }_{L^{2}}^{2} 
     %+ C_2 T \leq C_3 +C_4 T.
%\end{align*}
thus
\begin{equation} \label{ut lim l5/3 w1 3}
    \partial_t \tilde{u}_m^k \text{ is bounded in } L^{5/3}(0,T;(W^{1,5/2}_{0,\sigma})').
\end{equation}

\subsection{Passing the limit as \texorpdfstring{$(m,k) \to (\infty,0)$}{(m,k) -> (inf,0)} for the case  \texorpdfstring{$s \in (1,2)$} { s e (1,2)} }
Having all the necessary estimates, we can start looking for strong convergence to pass to the limit in nonlinear terms. 

With respect to  $\{\tilde{n}^k_m\}$ sequence, from estimates  \eqref{tilde n lim}, \eqref{d-t-n}, and  the compactness Lemma \ref{Lema Aubin Lions}, there exist a subsequence of $\{ \tilde{n}^k_m \}$, relabeled the same, and a limit function $n$ such that, as $(m,k) \to (\infty,0)$,
%We note that $W^{1,5s/(s+3)} \subset L^{15s/(9-2s)}$, hence from  \eqref{n-tilde} and \eqref{d-t-n} and  the compactness Lemma \ref{Lema Aubin Lions} %for 
%$$
%X= W^{1,5s/(s+3)}, \quad B = L^{5s/3},  \quad Y = (W^{1,\infty})^\prime,
%$$
% one has 
%
\begin{equation*} 
   \tilde{n}^k_m  \to n \text{ in } C([0,T];(W^{1,10})')\cap L^{5s/(s+3)}(0, T ;L^{5s/(s+3)}).
\end{equation*}
By accounting \eqref{conv n ntil}, we also have the same convergences for $n^k_m$ %and $\tilde{n}^k_m $ 
towards the same limit $n$. Moreover, from  \eqref{tilde n lim},
\begin{equation} \label{n-tilde}
   {n}^k_m   %\rightharpoonup 
    \to   n \text{ weakly in }  L^{5s/3}(0, T ; L^{5s/3}) \cap L^{5s/(s+3)}(0, T ; W^{1,5s/(s+3)}), 
    \text{ weakly$*$ in } L^\infty(0,\infty;L^s), 
\end{equation}
and
\begin{equation} \label{n conv lplp}
   n^k_m  \to n \text{ in } L^{p}(0,T;L^{p}), \quad \forall\, p \in [1,5s/3),
\end{equation}
and from \eqref{d-t-n}
\begin{equation} 
\partial_t \tilde{n}^k_m \to  \partial_t n \text{ weakly in }  L^{10/9}(0,T;(W^{1,10})').
 \end{equation}

By applying the Dominated Convergence Theorem jointly to \eqref{n conv lplp},
\begin{equation} \label{Tm conv}
     T^m(n_m^k) \to n \text{ in } L^p\left(0, T ; L^p\right), \quad \forall \, p \in[1,5 s / 3).
\end{equation}
Similarly, 
%since $G^m \in C^1$ and $n^i$ is bounded in $L^\infty(0,T,L^s)$, then 
again by  Dominated Convergence Theorem, \eqref{n conv lplp} and Lemma \ref{lema conv raiz},  it follows that
\begin{equation} \label{Gm conv}
    G^m(n_m^k) \to \frac{n^s}{s} \text{ in }  L^p\left(0, T ; L^p\right), \quad \forall\, p \in [1,5 / 3). 
\end{equation}

Arguing now  for $\{\tilde{z}_m^k\}$, using estimates \eqref{tilde z lim} and \eqref{zt lim l2l3/2},  %the compact embedding from $H^2$ into $L^\infty$, and that $L^\infty$ is continuously embedded into $L ^{3/2}$, we can apply Lemma \ref{LemaSimon}  choosing the triple 
%$$
%X = H^2, \ \quad B = L^{\infty}, \ \quad Y = L^{3/2}.
%$$
%This yields that up to a subsequence
%\begin{equation} \label{z conv lplp}
%    \tilde{z}_m^k \to z \text{ in } L^p(0,T;L^p), \ \forall p \in [1,\infty).
%\end{equation}
%Moreover, 
from the compactness of Lemma \ref{Lema Aubin Lions} 
%with the triple of spaces 
%$$
%X = H^2, \ \quad B = H^1, \ \quad Y = L^{3/2},
%$$
we get the strong convergence, up to subsequences, and  a limit function $z$ such that 
\begin{equation} \label{z conv l2h1}
    \tilde{z}_m^k \to z \text{ in } C([0,T];L^2)\cap L^2(0,T;H^1)\cap  L^p(0,T;L^p), \ \forall p \in [1,\infty).
\end{equation}
%In particular, from the bound of $\tilde{z}_m^k$ in $L^\infty(0,T;L^\infty)$, one also has 
%\begin{equation} \label{z conv lplp}
%    \tilde{z}_m^k \to z \text{ in } L^p(0,T;L^p), \ \forall p \in [1,\infty).
%\end{equation}
Due to \eqref{conv cont pra por partes}, the same convergences will hold true to $z_m^k$  towards the same limit function $z$. Also, from estimate \eqref{grad z lim} 
% \eqref{grad z linf l2 s<2} and Lemma \ref{lema uniform estimat}, 
we have, up to a subsequence, 
%$$
%z_m^k \text{ is bounded in }  L^\infty(0,T;H^1) \cap L^\infty(0,T;L^\infty),
%$$
%that jointly to \eqref{z conv l2h1} implies 
\begin{equation} \label{z fraco estrela}
    z_m^k \overset{\ast} \to 
  %  {\rightharpoonup}
     z  \text{ weakly$*$ in } L^\infty(0,\infty;H^1 \cap L^\infty)  \text{ and weakly in } L^2(0,T;H^2),
\end{equation}
%Moreover, by \eqref{grad z lim}  
%\begin{equation} \label{z conv fraca l2h2}
%     z_m^k \rightharpoonup z \text{ in } L^2(0,T;H^2),
%\end{equation}
by \eqref{grad z linf l2 s<2} 
\begin{equation} \label{grad z conv fraca l4l4}
      \nabla z_m^k \to
   %   \rightharpoonup 
      \nabla z \text{ weakly in } L^4(0,T;L^4),
\end{equation}
and by \eqref{zt lim l2l3/2} 
\begin{equation} \label{zt conv fraca l2 l3/2}
      \partial_t \tilde{z}_m^k \to
    %  \rightharpoonup 
      \partial_t z \text{ weakly in } L^{5/3}(0,T;L^{5/3}).
\end{equation}

With respect to velocity sequence $\{ \tilde{u}^k_m \}$, we use estimates \eqref{tilde u lim} and \eqref{ut lim l5/3 w1 3}, and that $V$ is compactly embedded in $H$ and $H$ is continuously embedded in $(W^{1,5/2}_{0,\sigma})^\prime$. Therefore, by Lemma \ref{Lema Aubin Lions}, 
 there exist a subsequence of $\{ \tilde{u}^k_m \}$, again relabeled the same, and  a limit function $u$ such that 
\begin{equation} \label{u conv l2l2}
    \tilde{u}_m^k \to u \text{ in } C([0,T];(W^{1,5/2}_{0,\sigma})')\cap L^{2}(0,T;H)\cap L^{p}(0,T;L^{p}), \quad \forall p<10/3. 
\end{equation}
%
%
%D: BELOW WAS STILL LEMA 2.6 BUT CHANGED IT, I GUESS ITS OK
%
% In order to deal with the tensor product of $u$ by itself, we also apply the Compactness Lemma \ref{Lema Aubin Lions} with
% $$
%X = V, \ \quad B =L^{10/3}, \ \quad Y = (W^{1,5/2}_{0,\sigma})^\prime.
%$$
%In particular, since $ \tilde{u}^k_m$ is bounded in $L^{10/3}(0,T;L^{10/3})$, it will also ensure that
%\begin{equation} \label{u conv forte l10/3 l10/3}
%    \tilde{u}^k_m  \to u \text{ in } L^{p}(0,T;L^{p}), \quad \forall p \in [1,10/3).
%\end{equation}
%
Again, by \eqref{conv cont pra por partes}, those convergences will also hold for $u^k_m$. 
Moreover, from \eqref{u linfl2 s<2} and \eqref{u l2l6 lim 10/3 10/3}
 \begin{equation} \label{u conv fraca 10/3 10/3}
     u^k_m \to
    % \rightharpoonup 
     u \text{ weakly$*$ in } L^{\infty}(0,\infty;H), 
     \text{ weakly in } L^{2}(0,T;V) \cap L^{10/3}(0,T;L^{10/3}),
 \end{equation}
%and, from \eqref{u linfl2 s<2},
%\begin{equation} \label{u conv fraca est linf l2}
%    u^k_m \overset{\ast}{\rightharpoonup} u \text{ in } L^{\infty}(0,T;H) \ \ \mbox{ and } \ \ u^k_m \rightharpoonup   u \text{ in } L^{2}(0,T;V).
%\end{equation}
and from \eqref{ut lim l5/3 w1 3}
\begin{equation}
    \partial_t \tilde{u}_m^k  \to \partial_t u \text{ weakly in } L^{5/3}(0,T;(W^{1,5/2}_{0,\sigma})').
\end{equation}

We now handle the nonlinear terms. Since $g(x) = x^{-1}$ is a Lipschitz function on $[\alpha,\infty)$, from \eqref{z conv l2h1}, one has 
\begin{equation} \label{1/z conv lplp}
    \frac{1}{z^k_m} \to \frac{1}{z} \text{ in } L^p(0,T;L^p), \quad \forall \ p <\infty,
\end{equation}
%
%and 
%\begin{equation}\label{fuerte-nabla-z}
%{\nabla z_m^k} \to{\nabla z }\text{ in } L^2(0,T;L^2),
%\end{equation}
%and from Lemma \ref{lema conv raiz}, this implies that
%$$
%\abs{\nabla z_m^k}^2 \to \abs{\nabla z }^2 \text{ in } L^1(0,T;L^1).
%$$
%But as $\abs{\nabla z_m^k}^2$ is bounded in $L^2(0,T;L^2)$ it is clear that
%\begin{equation} \label{grad z 2  convl2l2}
 %    \abs{\nabla z_m^k}^2 \rightharpoonup \abs{\nabla z}^2 \text{ in } L^2(0,T;L^2).
%\end{equation}
%\eqref{z conv lplp} % and \eqref{fuerte-nabla-z}, 
$$
 \nabla (z_m^k)^2 \to \nabla( z^2) \text{ in } L^p(0,T;L^p),\quad \forall\, p<4,
$$
%
%Moreover, from \eqref{grad z lim}
%% and the fact that $z_m^k$ is bounded in $L^\infty(0,T;L^\infty)$, 
%we have that $\nabla (z_m^k)^2$ is also bounded in $L^4(0,T;L^4)$, hence
%\begin{equation} \label{grad z 2 l2l2}
%     \nabla (z_m^k)^2 \rightharpoonup \nabla( z^2) \text{ in } L^4(0,T;L^4).
%\end{equation}
and from 
\eqref{tm grad z2 l2 ls}
\begin{equation} \label{Tmn gradz2 conv fraca l20l20}
    T^m(n_m^k) \nabla(z_m^k)^2 \to
    %\rightharpoonup 
    n \nabla (z^2) \ \text{ weakly in } L^{5s/(s+3)}(0,T;L^{5s/(s+3)}).%\quad \forall\, p<5s/(s+3).
 %   L^{20s/(5s+12)}(0,T;L^{20s/(5s+12)}),
\end{equation}
%because the product is bounded in the desired space and for any bigger  space the product of a weak and a strong convergent sequence will produce a weak convergent sequence with the same limit function.

Arguing as above, from \eqref{z conv l2h1} and \eqref{grad z conv fraca l4l4} 
%from the same reasoning as above
\begin{equation} \label{grad z 1/z conv fraca l2 l2}
    \frac{\abs{\nabla z_m^k}^2}{z_m^k} \to
    %\rightharpoonup 
    \frac{\abs{\nabla z}^2}{z}
      \ \text{ weakly in } L^{2}(0,T;L^{2}),% \ \forall\, p<2,
\end{equation}
from \eqref{Gm conv} and \eqref{z conv l2h1} 
\begin{equation} \label{Tms z conv l5/3 l5/3}
    G^m( n_m^k) z_m^k \to 
  %  \rightharpoonup 
    \frac{1}{s}n^s z  \ \text{ weakly  in } L^{5/3}(0,T;L^{5/3}),% \ \forall\, p<5/3,
\end{equation}
and from  \eqref{Gm conv} and  \eqref{1/z conv lplp} 
\begin{equation}\label{Tms 1/z conv l5/3 l5/3}
      \frac{s \, G^m( n_m^k)}{z_m^k} \to
    %   \rightharpoonup 
    \frac{n^s}{z}  \ \text{ weakly  in } L^{5/3}(0,T;L^{5/3}).% \ \forall\, p<5/3.
\end{equation}

Moreover, from  \eqref{grad z conv fraca l4l4}  and \eqref{u conv l2l2},  
\begin{equation} \label{ u grad z conv l2l2 }  
u_m^k \cdot \nabla z_m^k \to u \cdot \nabla z \ \text{ weakly  in } L^{20/11}(0,T;L^{20/11}).%, \ \forall\, p<20/11.
\end{equation}
%In particular, as $u_m^k \cdot \nabla z_m^k$ is bounded in $L^4(0,T;L^{4/3})$, then
%\begin{equation} \label{ u grad z conv fraca l4 l4/3}    
%u_m^k \cdot \nabla z_m^k \rightharpoonup u \cdot \nabla z \text{ in } L^{4}(0,T;L^{4/3}).
%\end{equation}

As for the convective term on $n_m^k$, note that due to \eqref{n conv lplp}, \eqref{u conv fraca 10/3 10/3} 
%we can, for instance, assure that
%$$
%u_m^k n_m^k \rightharpoonup u \, n \text{ in } L^{1}(0,T;L^{1}),
%$$
%and therefore from 
and \eqref{un 10s10s}, we get that
\begin{equation}\label{un conv 10s 10s}
     u^k_m n^k_m   \to
     %\rightharpoonup 
     u\, n \ \text{ weakly in } L^{10s/(3s+6)}(0,T;L^{10s/(3s+6)}).
\end{equation}

For the fluid convective term, from \eqref{u conv l2l2} and \eqref{u conv fraca 10/3 10/3}, 
%since $\hat{u}_m^k, u_m^k \to u$  in $ L^{p}(0,T;L^p)$  for all $p \in [1,10/3)$, 
%we can deduce that 
\begin{equation} \label{u times u conv l1 l1}
    \hat{u}_m^k \otimes  u_m^k \to u \otimes u \text{ weakly in } L^{5/3}(0,T;L^{5/3}).
\end{equation}

By accounting all previous convergences, it is possible to pass the limit in each term of problem \eqref{sistema edp} as $(m,k) \to (\infty,0)$ and obtain that the limit $(n,z,u)$ is a  weak solution of system 
%in terms of $ z= \sqrt{c+ \alpha^2}$
\begin{equation}   \label{Sistema com z}
 \left\{ \ \begin{array}{l}
 n_t+ u \cdot \nabla n  =\Delta n-\nabla \cdot(n\nabla (z^2)), \\
 \displaystyle z_t  + u \cdot \nabla z= \Delta z- \frac{1}{2}n^s\left(z - \frac{\alpha^2}{z} \right) +\frac{\abs{\nabla z}^2}z, \\
 u_t +u \cdot \nabla u  +\nabla P  =\Delta u + n\nabla \Phi,  \\
 \nabla \cdot u  =0, \\ 
  \left.\partial_\eta n\right|_{\Gamma}=\left.\partial_\eta z \right|_{\Gamma}= \left. u\right|_{\Gamma}= 0 , \quad n(0) = n^0, \quad z(0) = z^0, \quad u(0) = u^0.
\end{array}\right.  
\end{equation}
Note that the initial conditions are also well-defined  in $L^s\times (H^1\cap L^\infty)\times H$ due to the weak continuity in time of the solution $(n,z,u)$  in this space. 

As systems \eqref{Sist cont} and \eqref{Sistema com z} are equivalent in the sense that  
$ (n, z,u) $ is a weak solution of \eqref{Sistema com z} if and only if  $ (n, c,u)$  is a weak solution of \eqref{Sist cont}, with $ c = z^2 -\alpha^2$, we can deduce the existence of a weak solution  $ (n, c,u)$ to system \eqref{Sist cont}.

%This establishes the existence of a weak solution of \eqref{Sist cont} with a regularized sequence, with parameter $\delta$, of initial data given in \eqref{reg n0},\eqref{reg c0} and \eqref{reg u0}. However, since the derived estimates are independent of the regularization parameter $ \delta$ associated with the initial data, we can extend the same reasoning to conclude the proof of Theorem \ref{resultado principal} for $ s < 2$.

 \subsection{Passage to the limit as \texorpdfstring{$(m,k) \to (\infty,0)$}{(m,k) -> (inf,0)} for 
 the case \texorpdfstring{$s \geq 2$}{ s >= 2}.}
We first consider the energy estimates, and then proceed to get flux estimates. 
 
 \subsection*{Energy estimates}
   From Lemma \ref{en s>2}, 
  we obtain all  estimates from the previous case $s< 2$, and the additional estimate
    \begin{equation} \label{grad n lim l2 ls2}
   \nabla (n_m^k)^{s/2} \text{ is bounded in } L^2(0,T;L^2),
    \end{equation}
which allows us to simplify the argument to gain the uniform in time energy estimates given in the proof of Lemma \ref{energy-estim s<2}. Indeed, now it is enough to consider 
\[a_i :=  \frac{1}{4(s-1)} \norm{n^i}_{L^s}^s +  \frac{1}{2}\norm{\nabla z^i}_{L^2}^2 + \frac{C_1}{2}\norm{u^i}_{L^2}^2,\]
and
\[
    d_i :=  \dfrac{1}{4s}\norm {\nabla(n^i)^{s/2}}_{L^2}^2
    +  C_2 \norm{\nabla u^i}_{L^2}^2 +C_3 \left( \norm{D^2 z^i}_{L^2}^2 
    + \int \frac{\abs{\nabla z^i}^4}{(z^i)^2} \right)
     + \dfrac{s}{4} \int  G^m(n^i) \abs{\nabla z^i}^2 .
\]
 We just notice that instead of \eqref{est n+1a} and \eqref{est n+1}, 
 % we need to use interpolation. Indeed, 
 now the argument is
$$
      \norm{n^i}_{L^s}^s =      \norm{(n^i)^{s/2}}_{L^2}^2 \leq C\norm{\nabla (n^i)^{s/2}}_{L^2}^2 + C \norm{(n^i)^{s/2}}_{L^1}^2,
$$
and for the second term on the RHS, using interpolation, 
 there exists  $a\in (0,1)$ such that 
   \begin{align*}
    \norm{(n^i)^{s/2}}_{L^1} ^2   & = \norm{n^i}^{s}_{L^{s/2}} \leq \norm{n^i}_{L^s}^{sa} \norm{n^i}_{L^1}^{s(1-a)} 
            \leq \epsilon \norm{n^i}^{s}_{L^s} + C(\norm{n^0}_{L^1},\epsilon),
       \end{align*}
for any $\epsilon>0$ small enough.

\subsection*{Flux estimates} 
Following a similar idea stated in \cite{Correa_Vianna_Filho2023-gz}, an estimate of the chemotactic flux $ T^m(n_m^k) \nabla z_m^k $ can be obtained by splitting the domain in terms of $n^k_m(t,\cdot)$ %  a.e. $t\in  (0,T)$,  
by defining the sets $$
\left\{0 \leq n_m^k(t,\cdot) \leq 1\right\}=\left\{x \in \Omega \mid 0 \leq n_m^k\left(t, x\right) \leq 1
 \right\}
$$
and
$$
\left\{n_m^k (t,\cdot)> 1\right\}=\left\{x \in \Omega \mid n_m^k\left(t, x\right) > 1
\right\} .
$$
It holds that, a.e. in $(0,T)$
\begin{align*}
    \int T^m(n_m^k )^2 \abs{\nabla z_m^k}^2  & = \int_{\left\{0 \leq n_m^k \leq 1\right\}} T^m(n_m^k )^2 \abs{\nabla z_m^k}^2  + \int_{\left\{ n_m^k \geq 1\right\}} T^m(n_m^k) ^2 \abs{\nabla z_m^k}^2  \\ &\leq \int_{\left\{0 \leq n_m^k \leq 1\right\}}  \abs{\nabla z_m^k}^2  + \int_{\left\{ n_m^k \geq 1\right\}} T^m(n_m^k )^s \abs{\nabla z_m^k}^2   \\ & \leq \int  \abs{\nabla z_m^k}^2  + \int T^m(n_m^k )^s \abs{\nabla z_m^k}^2 .
\end{align*}
By integrating in time
\begin{align*}
    \int_0^T \int T^m(n_m^k)^2 \abs{\nabla z_m^k}^2  & \leq \int_0^T \int  \abs{\nabla z_m^k}^2 + \int_0^T\int T^m(n_m^k)^s \abs{\nabla z_m^k}^2,  
\end{align*}
which is bounded by  \eqref{z linf l2 s<2} and  \eqref{lim tm s grad z l2l2}.
%Lemmas \ref{Lema dif T G} , \ref{lema uniform estimat} and \ref{en s>2}.  
Therefore,
\begin{equation} \label{tm n nabla z lim l2 l2}
    T^m(n_m^k) \nabla z_m^k \text{ is bounded in } L^2 (0,T;L^2),
\end{equation}
hence passing to the limit one arrives at $n \nabla z \in L^2 (0,T;L^2)$.

Estimate \eqref{tm n nabla z lim l2 l2} will help us to obtain the $L^2 (0,T;L^2)$ estimate for the diffusion flux $\nabla n_m^k$. Indeed, by testing 
$n^i$ equation  by $n^i$, 
%as
%$$
%\int (u^i \cdot  \nabla n^i ) n^i = 0,
%$$
we get
\begin{align*}
   \delta_t\frac{1}{2}\int \abs{n^i}^2 &+  \frac{1}{2k} \int \abs{n^i - n ^{i-1}}^2 + \int \abs{\nabla n^i}^2 
     = 2\int z^i \,T^m(n^{i}) \nabla z^i \cdot \nabla n^i \\ &\leq 2 \norm{z ^i}_{L^\infty} \norm{T^m (n^i) \nabla z^i}_{L^2} \norm{ \nabla n^i}_{L^2} %\\ &
     \leq
  C\int T^m (n^i)^2  \abs{\nabla z^i}^2 + \frac{1}{2}  \int \abs{\nabla n^i}^2.
\end{align*}
Then multiplying by $2k$ and summing in $i$, and using \eqref{tm n nabla z lim l2 l2}, we end up with
%\begin{align*}
%    \int \abs{n^i}^2 +  \sum_{j=1}^i \norm{n^j - n^{j-1}}_{L^2}^2 +  k\sum_{j=1}^i \int \abs{\nabla n^j}^2 \leq C_1 + \int \abs{n^0}^2,
%\end{align*}
%concluding that
\begin{equation}\label{grad n lim l2l2}
    \nabla n_m^k \text{ is bounded in } L^2(0,T;L^2),
\end{equation}
hence passing to the limit one arrives at $ \nabla n \in L^2 (0,T;L^2)$. 

As in the case $s<2$, we can conclude that the limit functions $n,z,$ and $u$ are a weak solution to the system \eqref{Sistema com z}, completing the proof of Theorem \ref{resultado principal}.

\section*{Acknowledgments}

D. Barbosa was financed in part by the Coordenação de Aperfeiçoamento de Pessoal de Nível Superior - Brasil (CAPES) - Finance Code 001 and CNPq-Brazil. 
F. Guill\'en-Gonz\'alez was partially supported by Grant I+D+I PID2023-149182NB-I00 funded by 
MICIU/AEI/10.13039/501100011033 and ERDF/EU. 
G. Planas was partially supported by CNPq-Brazil grant 310274/2021-4, and FAPESP-Brazil grant 19/02512-5

 \bibliographystyle{plain}
\bibliography{referencias.bib} 
\end{document}